\numberwithin{equation}{section}
\newtheorem{thm}{Theorem}[section]
\newtheorem{lem}[thm]{Lemma}
\newtheorem{prop}[thm]{Proposition}
\newtheorem{defn}[thm]{Definition}
\theoremstyle{definition}
\newtheorem{rem}[thm]{Remark}
\theoremstyle{remark}
\newcommand{\norm}[1]{\left\Vert#1\right\Vert}
\newcommand{\abs}[1]{\left\vert#1\right\vert}
\newcommand{\R}{\mathbb{R}}
\newcommand{\N}{\mathbb{N}}
\newcommand{\argmin}{\arg\min}
\DeclareMathOperator{\dive}{div}
\DeclareMathOperator{\dist}{dist}
\patchcmd{\abstract}{\scshape\abstractname}{\textbf{\abstractname}}{}{}
\def\@makefnmark{} 
\begin{document}
\title[]{The $p_0-$Laplace \lq\lq Signature\rq\rq \ for Quasilinear Inverse Problems}
\author[A. Corbo Esposito, L. Faella, V. Mottola, G. Piscitelli, R. Prakash, A. Tamburrino]{
Antonio Corbo Esposito$^1$, Luisa Faella$^1$, Gianpaolo Piscitelli$^2$, Vincenzo Mottola$^1$, Ravi Prakash$^3$, Antonello Tamburrino$^{1,4}$}\footnote{\\$^1$Dipartimento di Ingegneria Elettrica e dell'Informazione \lq\lq M. Scarano\rq\rq, Universit\`a degli Studi di Cassino e del Lazio Meridionale, Via G. Di Biasio n. 43, 03043 Cassino (FR), Italy.\\
$^2$Dipartimento di Matematica e Applicazioni \lq\lq R. Caccioppoli\rq\rq, Universit\`a degli Studi di Napoli Federico II, Via Cinthia n. 26, Complesso Universitario Monte Sant'Angelo, 81026 Napoli, Italy.\\
$^3$Departamento de Matem\'atica, Facultad de Ciencias F\'isicas y Matem\'aticas, Universidad de Concepci\'on, Avenida Esteban Iturra s/n, Bairro Universitario, Casilla 160 C, Concepci\'on, Chile.\\
$^4$Department of Electrical and Computer Engineering, Michigan State University, East Lansing, MI-48824, USA.\\
Email: corbo@unicas.it, l.faella@unicas.it, gianpaolo.piscitelli@unina.it {\it (corresponding author)}, vincenzo.mottola@unicas.it, rprakash@udec.cl, antonello.tamburrino@unicas.it.}
\maketitle

\begin{abstract}
This paper refers to an imaging problem in the presence of nonlinear materials. Specifically, the problem we address falls within the framework of Electrical Resistance Tomography and involves two different materials, one or both of which are nonlinear. Tomography with nonlinear materials in the early stages of developments, although breakthroughs are expected in the not-too-distant future.

The original contribution this work makes is that the nonlinear problem can be approximated by a  {weighted} $p_0-$Laplace problem. From the perspective of tomography, this is a significant result because it highlights the central role played by the $p_0-$Laplacian in inverse problems with nonlinear materials. Moreover, when $p_0=2$, this result allows all the imaging methods and algorithms developed for linear materials to be brought into the arena of problems with nonlinear materials.

The main result of this work is that for \lq\lq small\rq\rq\ Dirichlet data, (i) one material can be replaced by a perfect electric conductor and (ii) the other material can be replaced by a material giving rise to a  {weighted} $p_0-$Laplace problem. 




\noindent\textsc{\bf MSC 2020}:  35J62, 35R30, 78A46.

\noindent \textsc{\bf Key words and phrases}. Inverse problem, Electrical Resistance Tomography, Elliptic PDE, Quasilinear PDE, Nonlinear problems, Linear approximation, Asymptotic behaviour, Imaging.
\end{abstract}

\section{Introduction}
\label{1-PosProb}
This paper is focused on nonlinear imaging problems in Electrical Resistance Tomography where the aim is to retrieve the nonlinear electrical conductivity $\sigma$, starting from boundary measurements in stationary conditions (steady currents). This is a nonlinear variant of the Calder\'on problem \cite{calderon1980inverse,calderon2006inverse}.

Analysis of this class of problems highlights the limiting behaviour of the solution (electric scalar potential) for boundary data approaching zero. In this case, the solution approaches a limit which is the solution of a  {weighted} $p-$Laplace problem. Moreover, the materials with nondominant growth can be replaced by either a \emph{perfect electric conductor} or a \emph{perfect electric insulator}. These results are significant from both a mathematical and an engineering point of view, since they make it possible to approximate a nonlinear phenomenon with a  {weighted} $p-$Laplace problem. In one sense, this suggests the \lq\lq fingerprint\rq\rq \ of a  {weighted} $p-$Laplace problem in a nonlinear problem. The linear case, i.e. $p=2$, is of paramount importance. In this case, we have a powerful bridge to apply all the imaging methods and algorithms developed for linear materials to nonlinear materials. The behaviour for large data has been studied in \cite{corboesposito2023Large} where we use different set of test functions for the Dirichlet energy as we do not have different growth exponents ($p$ and $p_0$) for the asymptotic behaviour.

Hereafter, we consider steady current operations where the constitutive relationship is nonlinear, local, isotropic and memoryless:  
\begin{equation} \label{J}
{\bf J}(x)=\sigma(x,\vert {\bf E}(x)\vert) {\bf E}(x)\quad\forall x\in\Omega.
\end{equation}
In (\ref{J}), $\sigma$ is the nonlinear electrical conductivity, ${\bf J}$ the electric current density, ${\bf E}$ the electric field and $\Omega\subset\R^n$, $n \geq 2,$ is an open bounded domain with Lipschitz boundary. $\Omega$ represents the region occupied by the conducting material. The electric field can be expressed through the electrical scalar potential $u$ as ${\bf E}(x)=-\nabla u(x)$, where $u$ solves the steady current problem:
\begin{equation}\label{gproblem1}
\begin{cases}
\dive\Big(\sigma (x, |\nabla u(x)|) \nabla u (x)\Big) =0\ \text{in }\Omega\vspace{0.2cm}\\
u(x) =f(x)\qquad\qquad\qquad\quad\  \text{on }\partial\Omega,
\end{cases}
\end{equation}
where $f$ is the applied boundary potential. Both $u$ and $f$ belong to proper  {function} spaces that will be defined in the following. 

\label{2-IPnonlinear}
The literature contains very few contributions on imaging in the presence of nonlinear materials. As quoted in \cite{lam2020consistency} (2020), {\it \lq\lq\ ... the mathematical analysis for inverse problems governed by nonlinear Maxwell's equations is still in the early stages of development.\rq\rq}. It can be expected that as new methods and algorithms become available, the demand for nondestructive evaluation and imaging of nonlinear materials will eventually rise significantly .

Among the contributions to the nonlinear Calder\'on problem, special attention has been paid to the case based on the $p-$Laplacian, where $\sigma(x,\vert {\bf E}(x)\vert)=\theta(x)\vert {\bf E}(x)\vert^{p-2}$ in equation $(\ref{J})$, with $\theta$ being an appropriate weight function. The nonlinear $p-$Laplace variant of the Calder\'on problem was initially posed by Salo and Zhong \cite{Salo2012_IP} and subsequently studied in \cite{brander2015enclosure,brander2016calderon,brander2018superconductive,guo2016inverse,brander2018monotonicity,hauer2015p}. As well as the nonlinear $p-$Laplace problem, mention must also be made of the work by Sun \cite{sun2004inverse,Sun_2005} for weak nonlinearities, the work by C\^arstea and Kar \cite{carstea2020recovery} which treated a nonlinear problem (linear plus a nonlinear term) and the work by Corbo Esposito et al. \cite{corboesposito2021monotonicity}. The latter treat a general nonlinearity within the framework of the Monotonicity Principle Method.

\label{3-Applications}
From the application perspective, nonlinear electrical conductivities can be found in semiconducting and ceramic materials (see \cite{bueno2008sno2}), with applications to cable termination in high voltage (HV) and medium voltage (MV) systems \cite{boucher2018interest, lupo1996field}, for instance. Nonlinear electrical conductivities characterize superconductors, key materials for such applications as energy storage, magnetic levitation systems, superconducting magnets (nuclear fusion devices, nuclear magnetic resonance) and high-frequency radio technology \cite{seidel2015applied, krabbes2006high}. Nonlinear electrical conductivity also appears in the area of biological tissues (see \cite{foster1989dielectric}). For instance, \cite{corovic2013modeling} proved that nonlinear models fit the experimental data better than linear models.

\label{4-Generalizations}
Problem (\ref{gproblem1}) is common to steady currents as well as to other physical settings. In the framework of electromagnetism, both nonlinear electrostatic and nonlinear magnetostatic\footnote{$^1$In magnetostatics, it is possible to introduce a magnetic scalar potential for treating simply connected and source free regions.}{$^1$} phenomena can be modelled as in (\ref{gproblem1}). In the first case the constitutive relationship is ${\bf D}(x)=\varepsilon(x,\vert {\bf E}(x)\vert) {\bf E}(x)$ (see \cite{miga2011non} and references therein, and \cite{yarali20203d}), where $\bf D$ is the electric displacement field, $\varepsilon$ is the dielectric permittivity and $\bf E$ the electric field. In the second case ${\bf B}(x)=\mu(x,\vert {\bf H}(x)\vert) {\bf H}(x)$ (see \cite{1993ferr.book.....B}), where $\bf B$ is the magnetic flux density, $\mu$ is the magnetic permeability, and $\bf H$ is the magnetic field.

\label{5-Inverse Problem}
From a general perspective, the inverse problem of retrieving a coefficient of a PDE ( {Partial Differential Equation}) from boundary measurements, such as the electrical conductivity $\sigma$ appearing in \eqref{gproblem1}, is nonlinear and ill-posed in the sense of Hadamard, i.e. it is an inverse problem.

\label{5a-IT}
A classic approach for solving an inverse problem consists in casting it in terms of the minimization of a proper cost function \cite{tikhonov1977solutions,tikhonov1998nonlinear}. The minimizer of this cost function gives the estimate of the unknown quantity. The cost function is usually built as the weighted sum of the discrepancy on the data and proper a priori information that must be provided to complement the loss of information inherent to the physics of the measurement process. There are many iterative approaches devoted to the search for the solution (the minimizer) of an inverse problem. An overview can be found in several specialized textbooks \cite{bertero1998introduction,tarantola2005inverse,vogel2002computational,engl1996regularization}. Other than the Gauss-Newton and its variant (see \cite{qi2000iteratively} for a review), let us mention some relevant iterative approaches applied to inverse problems such as the Quadratic Born approximation \cite{pierri1997local}, Bayesian approaches \cite{premel2002eddy}, the Total Variation regularization \cite{RudinOsher1992nonlinear,pirani2008multi}, the Levenberg-Marquardt method for nonlinear inverse problems \cite{Hanke_1997}, the Level Set method \cite{dorn2000shape,harabetian1998regularization}, the Topological Derivative method \cite{Jackowska-Strumillo2002231,ammari2012stability,fernandez2019noniterative} and the Communication Theory approach \cite{tamburrino2000communications}.

\label{5b-nonIT}
Iterative methods suffer from two major drawbacks: (i) they may be trapped into local minima and (ii) the computational cost may be very high. Indeed, the objective function to be minimized in order to achieve the reconstruction might present several/many local minima which may constitute points where an iterative algorithm may be trapped. Moreover, the computational cost at each iteration may be very high because it entails computing the objective function and, optionally, its gradient. Both computations are expensive in terms of computational resources. 

An excellent alternative to iterative methods is provided by noniterative ones. Noniterative methods are attractive because they call for the computation of a proper function of the space (the so-called indicator function) giving the shape of the interface between two different materials, i.e. the support of the region occupied by a specific material. They usually require a larger amount of data than iterative approaches, but the computation of the indicator function is much less expensive. In general, noniterative methods are suitable for real-time operations.

Only a handful of noniterative methods are currently available. These include the Linear Sampling Method (LSM) by Colton and Kirsch \cite{Colton_1996}, which evolved into the Factorization Method (FM) proposed by Kirsch \cite{Kirsch_1998}. Ikehata proposed the Enclosure Method (EM) \cite{ikehata1999draw,Ikehata_2000} and Devaney applied MUSIC (MUltiple SIgnal
Classification), a well-known algorithm in signal processing, as an imaging method \cite{Devaney2000}. Finally, Tamburrino and Rubinacci proposed the Monotonicity Principle Method (MPM) \cite{Tamburrino_2002}.

\label{6-PEC/PEI}
The prototype problem which motivated this study consists in imaging a two-phase material where the outer phase is linear and the inner phase is nonlinear (see Figure \ref{fig_1_intro}). A configuration of this type may be encountered when testing/imaging superconducting cables (see, for instance, \cite{lee2005nde,amoros2012effective,takahashi2014non,Higashikawa2021_SC}). 
The main result of this work is the proof that for \lq\lq small\rq\rq\ Dirichlet data $f$, the nonlinear material can be replaced by either a perfect electric conductor (PEC) or a perfect electric insulator (PEI). Consequently, when one material is linear, the limiting version of the original nonlinear problem is linear. These results provide a powerful bridge to bring all the imaging methods and algorithms developed for linear materials into the arena of problems presenting nonlinear materials. 

\begin{figure}[ht]
\centering
\includegraphics[width=\textwidth]{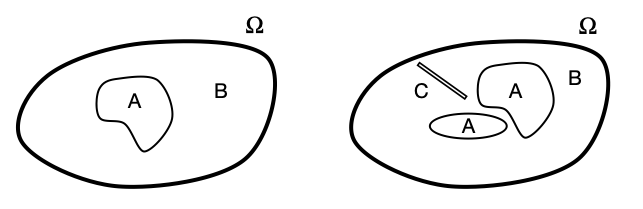}
\caption{Description of two possible applications. Left: inverse obstacle problem where the interface ($\partial A $) between two phases is unknown. $A$ and $B$ are the regions occupied by the inner material and the outer material, respectively. Right: nondestructive testing where  {regions A and B are known, while the position and shape of region $C$ (a crack) is unknown. The materials in regions $A$ and $B$ are also known.}}
    \label{fig_1_intro}
\end{figure}

Moreover, in order to reach a thorough understanding of the underlying mathematics, the results have been proved in a more general setting where both materials are nonlinear. In this case, one material is replaced by either a perfect electric conductor or a perfect electric insulator, and the other is replaced by a material yielding a  {weighted} $p_0-$Laplace problem. 

\label{7-math}
A specific feature of this work concerns the required assumptions, which are general and sharp, as discussed in Sections \ref{fram_sec} and \ref{counter_sec}. The assumptions are general: other than the standard conditions for existence and uniqueness of the solution of (\ref{gproblem1}), they involve pointwise convergence, only. The assumptions are sharp: the fundamental conditions specifically introduced for replacing one material with either a PEC or PEI cannot be removed, as shown by the counterexamples in Section \ref{counter_sec}. 

\label{8-arch}
The paper is organized as follows: in Section \ref{underlying} we present the ideas underpinning the work; in Section \ref{fram_sec} we set out the notations and the problem, together with the required assumptions; in Section \ref{mean_sec0} we give a fundamental inequality for small Dirichlet data; in Section \ref{small_sec} we discuss the limiting case for small Dirichlet data; in Section \ref{counter_sec} we provide the counterexamples proving that the specific assumptions are sharp; in Section \ref{num_sec} we provide numerical validation of the proposed theory; finally, in Section \ref{Con_sec} we provide some conclusions.

\section{Underlying ideas and expected results}
\label{underlying}
In this section we present the main ideas underpinning this work. The key is the \lq\lq educated guess\rq\rq \ that when the boundary data is \lq\lq small\rq\rq, the electric field $\mathbf{E}=-\nabla{u}$ is small a.e. in $\Omega$ and, therefore, its behaviour has to be governed by the asymptotic behaviour of $\sigma \left(x,E\right)$ in the constitutive relationship \eqref{J}. Specifically, let $A\subset\subset\Omega$ and  $B:=\Omega\setminus\overline A$, we assume that there exist two constants $p_0$ and $q_0$, and two functions $\beta_0$ and $\alpha_0$ which capture the behaviour of $\sigma$, as $E \to 0^+$, in $B$ and $A$, respectively:
\begin{align*}
\sigma_B(x,E) \sim \beta_0 (x)E^{p_0-2} \quad
\text{for a.e.}\  x \in B,\\
\sigma_A(x,E) \sim \alpha_0 (x)E^{q_0-2} \quad \text{for a.e.}\  x \in A,
\end{align*}
where $\sigma_B$ and $\sigma_A$ are the restriction of $\sigma$ to $B$ and $A$, respectively and $E = \left| \mathbf{E} \right| = \left| \nabla u \right|$.

Analysis of nonlinear problems is fascinating because of the wide variety of different cases. The most representative cases are shown in Figure \ref{fig_2_sigma}.
\begin{figure}[ht]
    \centering
    \includegraphics[width=0.325\textwidth]{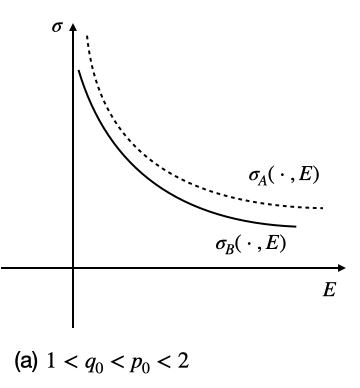}
    \includegraphics[width=0.325\textwidth]{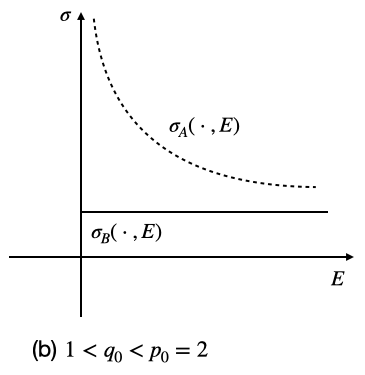}
    \includegraphics[width=0.325\textwidth]{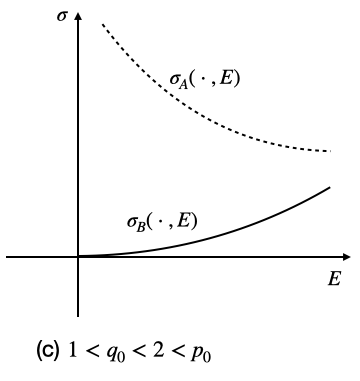}    
    \includegraphics[width=0.325\textwidth]{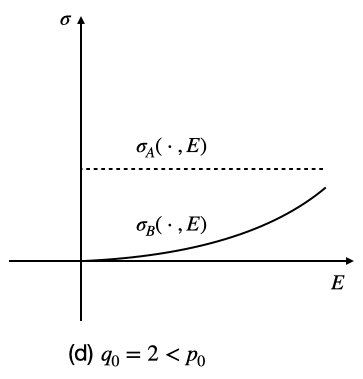}
    \includegraphics[width=0.325\textwidth]{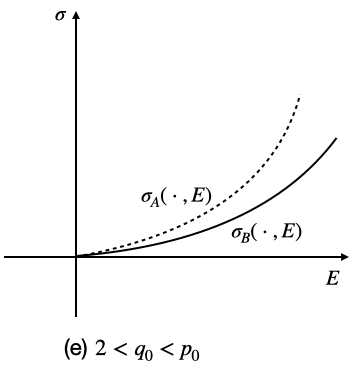}
    \caption{The electrical conductivities of the outer material and of the inner conducting material, when $\sigma_B(\cdot,E)=E^{p_0-2}$ is represented by the continuous lines and $\sigma_A(\cdot,E)=
    E^{q_0-2}$ is represented by the dashed lines. The configurations when the order relation between $p_0$ and $q_0$ is reversed easily follow.}
    \label{fig_2_sigma}
\end{figure}

When, for instance, $q_0<p_0$ it can be reasonably expected that either (i) region $A$ is a perfect electric conductor or (ii) region $B$ is a perfect electric insulator, because $\sigma_B$ would be dominant if compared to $\sigma_A$, at small electric fields.
When $A\subset\subset\Omega$, the ambiguity between (i) and (ii) is resolved in Section \ref{small_sec}, where we prove that region $B$ cannot be assimilated to a PEI and, therefore, $A$ has to be assimilated to a PEC.  {Finally, the case $p_0=q_0$ (that is the case when $A=\emptyset$) has been treated in
\cite{corboesposito2021monotonicity,MPMETHODS}}.

Moreover, the limiting problem where the conductor in region $A$ is replaced by a PEC, can reliably be modelled by a $p_0-$Laplace problem in region $B$, with a boundary condition given by a constant scalar potential $u$, on each connected component of $\partial A$. In other words, $u \sim u_{p_0}$ in $B$, where $u_{p_0}$ is the solution of the  {weighted} $p_0-$Laplace problem arising from the electrical conductivity $\beta_0(x) E^{p_0-2}$ in $B$ and  {$|\nabla u_{p_0}|=0$ on $A$.}

The latter observation is also inspiring as it properly defines the concept of \lq\lq small\rq\rq \ boundary data and the limiting problem. Specifically, it is well known that the operator mapping the boundary data $f$ into the solution of a  {weighted} ${p_0}-$Laplace problem is a homogeneous operator of degree 1, i.e. the solution corresponding to $\lambda f(x)$ is equal to $\lambda u_{p_0}(x)$, where $u_{p_0}$ is the solution corresponding to the boundary data $f$. Thus,  the term \lq\lq problem for small boundary data\rq\rq\ means \eqref{gproblem1} where the boundary data is $\lambda f$ and $\lambda \to 0$. 
Moreover, this suggests the need to study convergent properties of the normalized solution $v^\lambda$, defined as the ratio $u^\lambda / \lambda$, where $u^\lambda$ is the solution of \eqref{gproblem1} corresponding to the Dirichlet data $\lambda f(x)$. Indeed, if $u^\lambda$ can be approximated by the solution of the  {weighted} $p_0-$Laplace problem, then the normalized solution $v^\lambda(x)$ converges in $B$, i.e. it is expected to be constant w.r.t. $\lambda$, as $\lambda$ approaches $0$. We term this limit as $v^0$ and we expect it to be equal to $u_{p_0}$, i.e. the solution of the  {weighted} $p_0-$Laplace problem with boundary data $f$.

From the formal point of view, when $q_0<p_0$, $v^\lambda$  {weakly} converges to $ {w^0\in W^{1,p_0}(\Omega)}$ for $\lambda\to 0^+$, where $w^0$ is constant in each connected component of $A$, and it is the solution of:
\begin{equation}\label{pproblem_Bgrad0}
\begin{cases}
\dive\Big(\beta_0 (x) |\nabla w^0(x)|^{p_0-2}\nabla w^0 (x)\Big) =0 & \text{in }B,\vspace{0.2cm}\\
 {|\nabla w^0(x)| =0} &\text{a .e. in } A,\vspace{0.2cm}\\
\int_{\partial A}\sigma(x,|\nabla w^0(x)|)\partial_\nu w^0(x)dS=0\vspace{0.2cm}\\
w^0(x) =f(x) & \text{on }\partial \Omega,
\end{cases}
\end{equation}
in $B$. In this case, from the physical standpoint, region $A$ can be replaced by a Perfect Electric Conductor (PEC). 

The solution of problem \eqref{pproblem_Bgrad0} satisfies the minimum problem \eqref{H}, described in Section \ref{small_sec}.

 {On the other hand}, when $p_0<q_0$, $v^\lambda$ converges, in $B$, to $v^0_B\in W^{1,p_0}(B)$, that is the solution of the  {weighted} $p_0-$Laplace problem in region $B$:
\begin{equation}\label{pproblem_B0}
\begin{cases}
\dive\Big(\beta_0 (x) |\nabla v^0_B(x)|^{p_0-2}\nabla v^0_B (x)\Big) =0 & \text{in }B,\vspace{0.2cm}\\
\beta_0 (x) |\nabla v^0_B(x)|^{p_0-2}\partial_\nu v^0_B(x) =0 & \text{on }\partial A,\vspace{0.2cm}\\
v^0_B(x) =f(x) & \text{on }\partial \Omega.
\end{cases}
\end{equation}

From the physical standpoint, problem \eqref{pproblem_B0} corresponds to stationary currents where the electrical conductivity is $\sigma(x,E)=\beta_0(x)E^{p_0}$, and region $A$ is replaced by a perfectly electrical insulating material (PEI). 

 {The solution of problem \eqref{pproblem_B0} 
satisfies the minimum problem \eqref{Hii}
, described in Section \ref{small_sec}}.


\section{Framework of the Problem}
\label{fram_sec}
\subsection{Notations}

Throughout this paper, $\Omega$ denotes the region occupied by the conducting materials. We assume that $\Omega\subset\R^n$, $n\geq 2$, is a bounded domain (i.e. an open and connected set) with Lipschitz boundary and $A\subset\subset\Omega$ is an open bounded set with Lipschitz boundary and a finite number of connected components, such that $B:=\Omega\setminus\overline A$ is still a domain.
Hereafter we consider the growth exponents $p, q, p_0$ and $q_0$ such that  {$1< p,q<\infty$, $p \neq q$}, $1< p_0 \leq p<\infty$, $1< q_0 \leq q<\infty$ and $p_0\neq q_0$. $p$ ($p_0$) is related to the growth of the electrical conductivity in region $B$ for large (small) electric fields (see Section \ref{subsec_hyp} for further details). Similarly, $q$ ($q_0$) is related to the growth of the electrical conductivity in region $A$ for large (small) electric fields (see Figure \ref{fig_4_omega}). 

\begin{figure}[ht]
    \centering
\includegraphics[width=\textwidth]{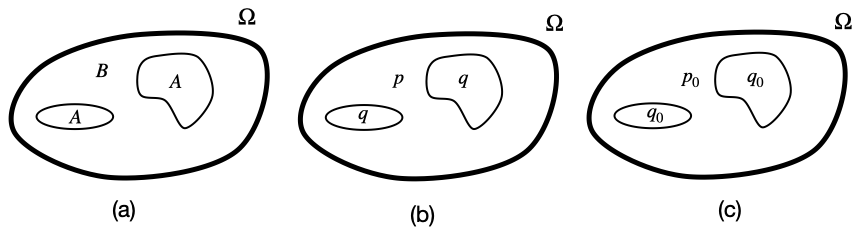}
    \caption{A two phase problem (left) together with the electrical conductivity growth exponents for the electric field in a neighborhood of $+\infty$ (center) and in a neighborhood of $0$ (right).}
    \label{fig_4_omega}
\end{figure}

We denote 
by $dx$ and $dS$ the $n-$dimensional and the $(n-1)-$dimensional Hausdorff measure, respectively. Moreover, we set
\[
L^\infty_+(\Omega):=\{\theta\in L^\infty(\Omega)\ |\ \theta\geq c_0\ \text{a.e. in}\ \Omega, \ \text{for a positive constant}\ c_0\}.
\]
Furthermore, for any $1<s<+\infty$ we denote by $W^{1,s}_0(\Omega)$ the closure set of $C_0^1(\Omega)$ with respect to the $W^{1,s}-$norm.

The applied boundary voltage $f$ belongs to the abstract trace space $B^{1-\frac 1p,p}(\partial\Omega)$, which, for any bounded Lipschitz open set, is a Besov space (refer to \cite{JERISON1995161,leoni17}),  equipped with the following norm:
\[
||u||_{B^{1-\frac 1p,p}(\partial\Omega)}=||u||_{L^p(\partial\Omega)}+|u|_{B^{1-\frac 1p,p}(\partial\Omega)}<+\infty,
\]
where $|u|_{B^{1-\frac 1p,p}(\partial\Omega)}$ is the Slobodeckij seminorm:
\[
|u|_{B^{1-\frac 1p,p}(\partial\Omega)}=\left(\int_{\partial\Omega}\int_{\partial\Omega}\frac{|u(x)-u(y)|^p}{||x-y||^{N-1+(1-\frac 1p)p} }dS (y)d S (x) \right)^\frac 1p,
\]
see Definition 18.32, Definition 18.36 and Exercise 18.37 in \cite{leoni17}.

This guarantees the existence of a function in $W^{1,p}(\Omega)$ whose trace is $f$ \cite[Th. 18.40]{leoni17}.

For the sake of brevity, we denote this space by $X^p(\partial \Omega)$ and its elements can be identified as the functions in $W^{1,p}(\Omega)$, modulo the equivalence relation $f\in [g]_{X^p(\partial \Omega)}$ if and only if $f-g\in W^{1,p}_0(\Omega)$, see \cite[Th. 18.7]{leoni17}.

Finally, we denote by $X^p_\diamond (\partial \Omega)$ the set of elements in $X^p(\partial \Omega)$ with zero average on $\partial\Omega$ with respect to the measure $dS$.

\subsection{The Scalar Potential and Dirichlet energy}

In terms of the electric scalar potential, that is ${\bf E}(x)=-\nabla u(x)$, the nonlinear Ohm's law \eqref{J} is
 \begin{equation*}
 {\bf J} (x)=- \sigma (x, |\nabla u(x)|)\nabla u(x),
 \end{equation*}
where $\sigma$ is the electrical conductivity, ${\bf E}$ is the electric field, and ${\bf J}$ is the electric current density.

The electric scalar potential $u$ 
solves the steady current problem:
 \begin{equation}\label{gproblem}
\begin{cases}
\dive\Big(\sigma (x, |\nabla u(x)|) \nabla u (x)\Big) =0\ \text{in }\Omega\vspace{0.2cm}\\
u(x) =f(x)\qquad\qquad\qquad\quad\  \text{on }\partial\Omega,
\end{cases}
\end{equation}
where $f\in X_\diamond^p(\partial \Omega)$. Problem \eqref{gproblem} is meant in the weak sense, that is
\begin{equation*}
\int_{\Omega }\sigma \left( x,| \nabla u(x) |\right) \nabla u (x) \cdot\nabla \varphi (x)\ \text{d}x=0\quad\forall\varphi\in C_c^\infty(\Omega).
\end{equation*}

The solution $u$ restricted to $B$ belongs to $W^{1,p}(B)$, whereas $u$ restricted to $A$ belongs to $W^{1,q}(A)$;  {however} the solution $u$ as a whole is an element of the  {largest between the two functional spaces $W^{1,p}(\Omega)$ and $W^{1,q}(\Omega)$. Furthermore, }(i) if $p\leq q$ then $W^{1,p}(\Omega)\cup W^{1,q}(\Omega)=W^{1,p}(\Omega)$, and (ii) if $p\geq q$ then $W^{1,p}(\Omega)\cup W^{1,q}(\Omega)=W^{1,q}(\Omega)$.

The solution $u$ satisfies the boundary condition in the sense that $u-f\in W_0^{1,p}(\Omega)\cup W_0^{1,q}(\Omega)$ and we write $u|_{\partial\Omega}=f$.

Moreover, the solution $u$ is variationally characterized as
\begin{equation}\label{gminimum}
\argmin\left\{ \mathbb{E}_\sigma\left( u\right)\ :\ u\in W^{1,p}(\Omega)\cup W^{1,q}(\Omega), \ u|_{\partial\Omega}=f\right\}.
\end{equation}


In (\ref{gminimum}), the functional $\mathbb{E}_\sigma\left( u\right)$ is the Dirichlet energy
\begin{equation*}
\mathbb{E}_\sigma
\left(  u \right) = \int_{B} Q_B (x,|\nabla u(x)|)\ \text{d}x+ \int_A Q_A (x,|\nabla u(x)|)\ \text{d}x
\end{equation*} 
where $Q_B$ and $Q_A$ are the Dirichlet energy density in $B$ and in $A$, respectively:
\begin{align*}
& Q_{B} \left( x,E\right)  :=\int_{0}^{E} \sigma_B\left( x,\xi \right)\xi  \text{d}\xi\quad \text{for a.e.}\ x\in B\ \text{and}\ \forall E\geq0,\\
& Q_{A}\left( x,E\right)  :=\int_{0}^{E} \sigma_A\left( x,\xi \right)\xi  \text{d}\xi\quad \text{for a.e.}\ x\in A\ \text{and}\ \forall E\geq 0,
\end{align*}
and $\sigma_B$ and $\sigma_A$ are the restiction of the electrical conductivity $\sigma$ in $B$ and $A$, respectively.

\subsection{Requirements on the Dirichlet energy densities}\label{subsec_hyp}
In this Section, we provide the assumptions on the Dirichlet energy densities $Q_B$ and $Q_A$, to guarantee the well-posedness of the problem and to prove the main convergence results of this paper.

For each individual result, we will make use of a minimal set of assumptions, among those listed in the following.

Firstly, we recall the definition of the Carathéodory functions.
\begin{defn}
$Q:\Omega\times[0,+\infty)\to\R$ is a Carathéodory function iff:
\begin{enumerate}
\item $\Omega\ni x\mapsto Q(x,E)$ is measurable for every $E\in [0,+\infty)$,
\item $[0,+\infty)\ni E\mapsto Q(x, E)$ is continuous for almost every $x\in\Omega$.
\end{enumerate}
\end{defn}
The assumptions on $Q_B$ and $Q_A$, required to guarantee the existence and uniqueness of the solution, are as follows.
\begin{itemize}
\item[\textbf{(A1)}]  $Q_B$ and $Q_A$ are Carathéodory functions;
\item[\textbf{(A2)}]  {$[0,+\infty)\ni E\mapsto Q_B(x,E)$ and $[0,+\infty)\ni E\to Q_A(x,E)$ are nonnegative}, $C^1$,  strictly convex, $Q_B(x,0)=0$ for a.e. $x\in B$, and $Q_A(x,0)=0$ for a.e. $x\in A$.
\end{itemize}


The behaviour of $Q_A$ and $Q_B$ for small Dirichlet boundary data, satisfies the following assumptions:
\begin{itemize}
\item[\textbf{(A3)}] There exists two exponents $p_0$ and $q_0$ with $1< p_0 \leq p<\infty$, $1< q_0 \leq q<\infty$ and $p_0\neq q_0$, such that: 
\[
\begin{split}
&(i)\  \underline{Q}\  {\max\left\{  \left(\frac{E}{E_0}\right)^{p_0},\left(\frac{ E}{E_0}\right)^p\right\}}\leq Q_B(x, E)\leq\overline Q \max\left\{  \left(\frac{E}{E_0}\right)^{p_0},\left(\frac{ E}{E_0}\right)^p\right\}\\
& \qquad\qquad\qquad\qquad\qquad\qquad\qquad\qquad\qquad\qquad \text{for a.e.} \  x\in B \ \text{and}\ \forall\  E\ge 0,\\
&(ii)\  \underline{Q} \  {\max\left\{  \left(\frac{E}{E_0}\right)^{q_0},\left(\frac{ E}{E_0}\right)^q\right\}}\leq Q_A(x, E)\leq\overline Q \max\left\{  \left(\frac{E}{E_0}\right)^{q_0},\left(\frac{ E}{E_0}\right)^q\right\}\\
& \qquad\qquad\qquad\qquad\qquad\qquad\qquad\qquad\qquad\qquad \text{for a.e.} \  x\in A \ \text{and}\ \forall\  E\ge 0.
\end{split}
\]
\end{itemize}
 {Assumption (A2) implies that both $Q_B$ and $Q_A$ are increasing functions in $E$; moreover, (A2) and (A3) imply that both $Q_B(x, E)\leq \overline Q$ and $Q_A(x,E)\le\overline Q$, when $0\le E\leq E_0$.}

Finally, assumption  {(A3) is implied by the well-known hypothesis used} in the literature (see e.g. assumptions (H4) in \cite{corboesposito2021monotonicity} and (0.2) in \cite{giaquinta1982regularity}).
\begin{itemize}
\item[{\bf (A4)}] There exists a function $\beta_0\in L^\infty_+(B)$ such that:
\begin{equation*}
\begin{split}
\lim_{E\to 0^+} \frac{Q_B (x,E)}{E^{p_0}}=\beta_0(x)\quad \text{for a.e.}\  x\in B.
\end{split}
\end{equation*}
\end{itemize}

In Section \ref{counter_sec}, we will provide another counterexample to show that assumption (A4) is sharp.


\subsection{Connection among $\sigma$, ${\bf J}$ and $Q$}

This paper is focused on the properties of the Dirichlet energy density $Q$, while, in physics and engineering the electrical conductivity $\sigma$ is of greater interest. From this perspective, assumptions (Ax) are able to include a wide class of electrical conductivities (see Figure \ref{fig_5_assumptions}). In other words, the (Ax)s are not restrictive in practical applications.

\begin{figure}[h]
\centering
\includegraphics[width=0.42\textwidth]{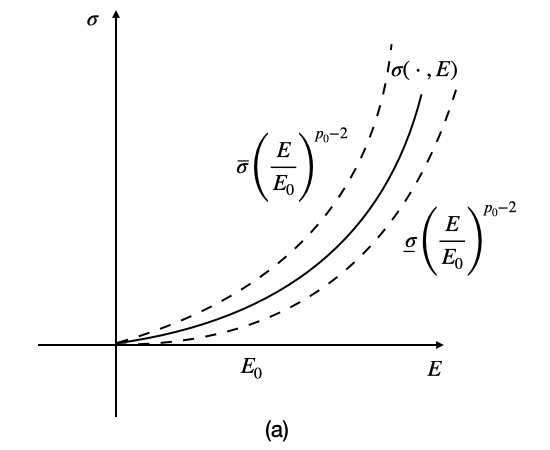}
	\includegraphics[width=0.42\textwidth]{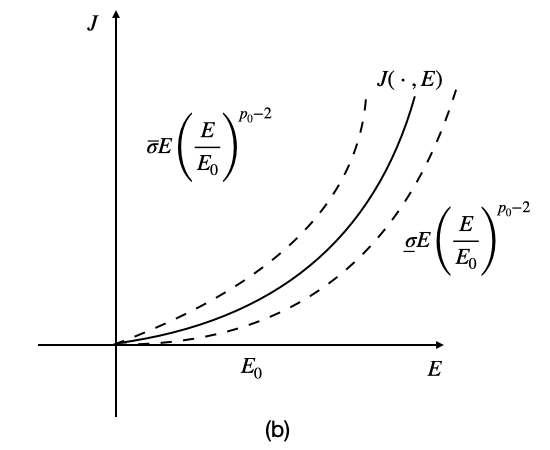}
	\includegraphics[width=0.42\textwidth]{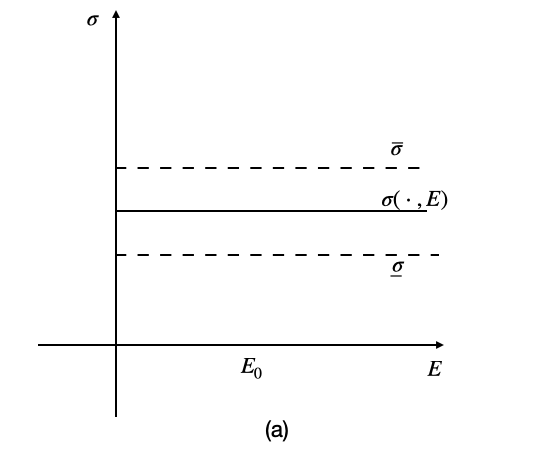}
	\includegraphics[width=0.42\textwidth]{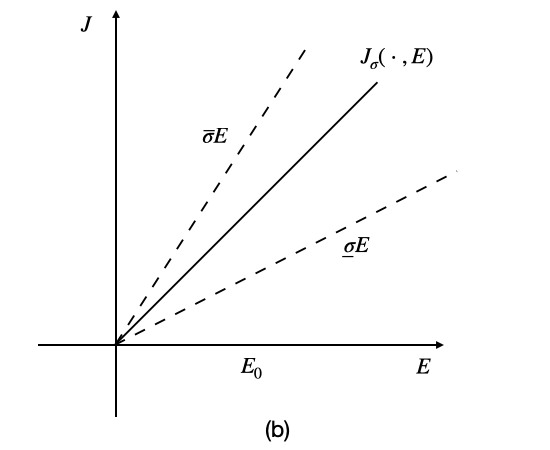}
	\includegraphics[width=0.42\textwidth]{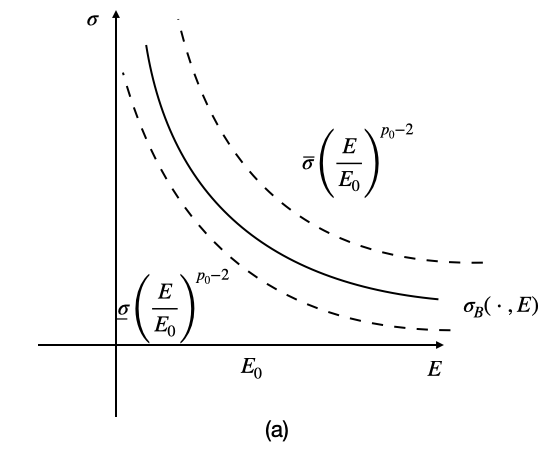}
	\includegraphics[width=0.42\textwidth]{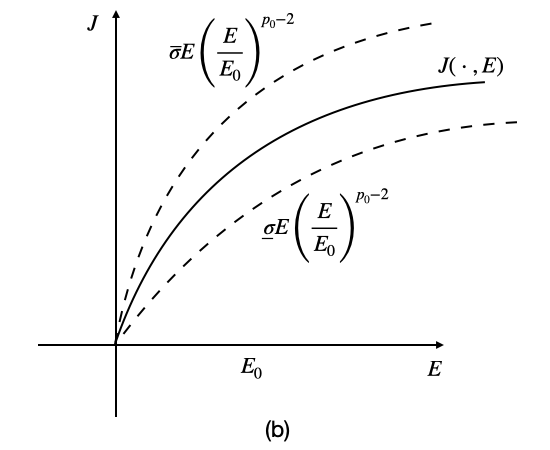}
	\caption{Behaviour of the constitutive relationship in a neighborhood of $E=0$ for $p_0>2$, $p_0=2$ and $p_0<2$: in terms of (a) the electrical conductivity $\sigma$ and (b) the electrical current density $J$.  {Dashed} lines correspond to the upper and lower bounds to either $\sigma$ or $J$.}
	\label{fig_5_assumptions}
\end{figure}

There is a close connection between $\sigma$, $J$ and $Q$. Indeed,
\begin{equation*}
\begin{split}
Q_B
\left( x,E \right)  =\int_{0}^{E}{J_B} ({ x, \xi}) \ \text{d}{ \xi}\quad\text{for a.e.} \ x\in B\ \text{and}\ \forall E> 0,\\
Q_A
\left( x,E \right)  =\int_{0}^{E}{J_A} ({ x, \xi}) \ \text{d}{ \xi}\quad\text{for a.e.} \ x\in A\ \text{and}\ \forall E> 0,
\end{split}
\end{equation*}
where $J_B$ and $J_A$ is the magnitude of the current density in regions $B$ and $A$, respectively:
\begin{equation}\label{connsJQ}
    \begin{split}
J_B (x, E)&=\partial_E Q_B(x,E)=\sigma_B(x, E)E\quad \text{for a.e.} \ x \in B\ \text{and}\ \forall E>0,\\
J_A (x, E)&=\partial_E Q_A(x,E)=\sigma_A(x, E)E\quad \text{for a.e.} \ x \in A\ \text{and}\ \forall E>0.
    \end{split}
\end{equation}

The electrical conductivity $\sigma(x,E)$ is the secant to the graph of the function $J_\sigma(x,E(x))$ and $Q_\sigma (x, E(x))$ is the area of the sub-graph of $J_\sigma(x, E(x))$. For a geometric interpretation of the connections between $\sigma$, $J_\sigma$ and $Q_\sigma$, see Figure \ref{fig_6_JE}.

\begin{figure}[ht]
	\centering
	\includegraphics[width=0.45\textwidth]{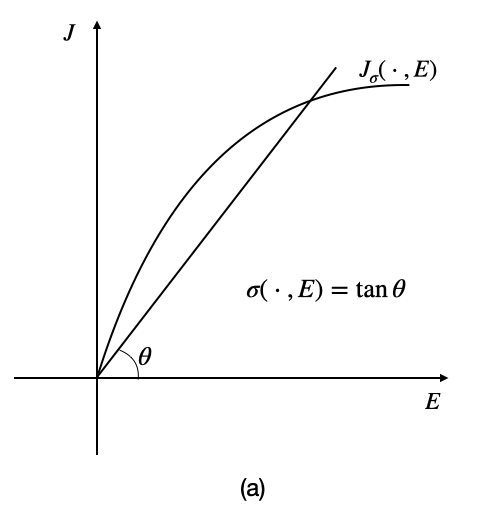}
	\includegraphics[width=0.45\textwidth]{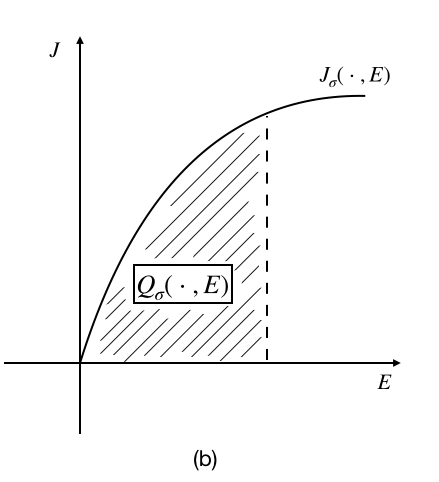}
	\caption{For any given spatial point in the region $\Omega$, (a) the electrical conductivity $\sigma(\cdot,E)$ is the secant line to the graph of the function $J_\sigma(\cdot,E)$;
	(b) $Q_\sigma (\cdot, E)$ is the area of the sub-graph of $J_\sigma(\cdot, E)$.}
	\label{fig_6_JE}
\end{figure}

\subsection{Existence and uniqueness of the solutions}
The proof of the existence and uniqueness of the solution for \eqref{gproblem} in its variational form, relies on standard methods of the Calculus of Variations, when the Dirichlet energy density presents the same growth in any point of the domain $\Omega$. The case treated in this work is nonstandard, because the Dirichlet energy density presents different growth in $B$ and $A$ and, hence, we provide a proof in the following.

\begin{thm}
Let $1<p, q<+\infty$, $p\neq q$ and $f\in X_\diamond^p(\partial \Omega)$. If (A1), (A2),  {(A3)} hold, then there exists a unique solution of problem \eqref{gminimum}.
\end{thm}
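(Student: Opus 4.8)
The plan is to apply the direct method of the calculus of variations, with the extra care demanded by the nonstandard feature that the energy density grows like $E^p$ in $B$ and like $E^q$ in $A$. Throughout set $r:=\min\{p,q\}$, so that, by the identification recalled before the statement, $W^{1,p}(\Omega)\cup W^{1,q}(\Omega)=W^{1,r}(\Omega)$, a reflexive space since $r>1$.

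First I would check that the admissible class contains a competitor of finite energy, so that $\inf\mathbb{E}_\sigma$ is finite (it is clearly nonnegative by (A2)). Since $f\in X_\diamond^p(\partial\Omega)$ there is $w_0\in W^{1,p}(\Omega)$ with $w_0|_{\partial\Omega}=f$; however, when $q>p$ this $w_0$ need not satisfy $\nabla w_0\in L^q(A)$, hence need not have finite energy. To repair this I would exploit $A\subset\subset\Omega$: fixing open sets $A\subset\subset U\subset\subset\Omega$, a cutoff $\chi\in C_c^\infty(U)$ with $\chi\equiv 1$ near $\overline A$, and a mollification $w_\eps$ of $w_0$ that is smooth on $U$, I set $w:=\chi w_\eps+(1-\chi)w_0$. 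Then $w\in W^{1,p}(\Omega)\subseteq W^{1,r}(\Omega)$, $w=w_0$ near $\partial\Omega$ so that $w|_{\partial\Omega}=f$, and $w$ is smooth near $\overline A$ so that $\nabla w\in L^q(A)$; by the upper bound in (A3) this yields $\mathbb{E}_\sigma(w)<+\infty$.

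Next comes coercivity. For a minimizing sequence $(u_k)$ with $\mathbb{E}_\sigma(u_k)\le M$, the lower bound in (A3) together with the elementary estimate $\max\{a^{p_0},a^p\}\ge a^p-1$ (valid for all $a\ge 0$) gives $\|\nabla u_k\|_{L^p(B)}^p\le E_0^p\big(M/\underline Q+|B|\big)$ and, analogously, a uniform bound on $\|\nabla u_k\|_{L^q(A)}^q$. Via $L^p(B)\hookrightarrow L^r(B)$ and $L^q(A)\hookrightarrow L^r(A)$ these combine into a uniform bound on $\|\nabla u_k\|_{L^r(\Omega)}$; since $u_k-w\in W_0^{1,r}(\Omega)$, Poincaré's inequality upgrades this to a uniform bound in $W^{1,r}(\Omega)$. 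By reflexivity I extract $u_k\rightharpoonup u$ in $W^{1,r}(\Omega)$, and the separate bounds force $\nabla u_k\rightharpoonup\nabla u$ weakly in $L^p(B)$ and in $L^q(A)$ respectively, while weak continuity of the trace gives $u|_{\partial\Omega}=f$.

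The final ingredient is weak lower semicontinuity. The key point is that (A2) makes $E\mapsto Q_B(x,E)$ and $E\mapsto Q_A(x,E)$ convex and, by the remark following (A3), increasing, whence $\xi\mapsto Q_B(x,|\xi|)$ and $\xi\mapsto Q_A(x,|\xi|)$ are convex on $\R^n$; with (A1) and nonnegativity this is exactly the hypothesis of the classical weak lower semicontinuity theorem for convex integral functionals. Applying it on $B$ and on $A$ with the respective weak gradient convergences gives $\liminf_k\mathbb{E}_\sigma(u_k)\ge\mathbb{E}_\sigma(u)$, so $u$ is admissible and minimizing. Uniqueness then follows from strict convexity: the strict convexity of $Q$ in (A2) together with its strict monotonicity makes $\xi\mapsto Q(x,|\xi|)$ \emph{strictly} convex on $\R^n$ (treating separately the cases $|\xi|\neq|\eta|$ and $|\xi|=|\eta|$ with $\xi\neq\eta$, where strictness comes from the norm); hence if $u_1,u_2$ are minimizers, testing with $(u_1+u_2)/2$ forces $\nabla u_1=\nabla u_2$ a.e., and the common boundary datum yields $u_1=u_2$. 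I expect the main obstacle to be the bookkeeping imposed by the two distinct growth exponents, namely exhibiting a finite-energy competitor and tracking the two separate weak gradient convergences through coercivity and lower semicontinuity, rather than any single deep estimate.
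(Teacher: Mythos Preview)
Your proof is correct and follows essentially the same route as the paper's: the direct method in $W^{1,\min\{p,q\}}(\Omega)$, with coercivity from the lower bound in (A3), weak lower semicontinuity from (A1)--(A2), and uniqueness from strict convexity. The paper packages the last two steps as a single appeal to \cite[Th.~3.30]{dacorogna2007direct} and constructs the finite-energy competitor more simply by taking a $W^{1,p}$ extension of $f$ that is \emph{constant} on $A$ (so the $Q_A$ term vanishes outright), in place of your cutoff--mollification argument.
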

\begin{proof}
Before distinguishing the two cases depending on the exponents order, we observe that there exists a function $u_0\in W^{1,p}(\Omega)$ that assumes a suitable constant value in $A$, with $Tr(u_0)=f$ on $\partial\Omega$, such that $||u_0||_{W^{1,p}(\Omega)}\leq C(\partial\Omega)  ||f||_{X^p_\diamond(\partial\Omega)}<+\infty$, by the Inverse Trace inequality in Besov spaces \cite[Th. 18.34]{leoni17}.

For this function $u_0$, it is easily seen that $\mathbb E_\sigma(u_0)<+\infty$. As a consequence, $\mathbb E_\sigma$ is proper convex function, as required to apply \cite[Th. 3.30]{dacorogna2007direct}.

Moreover, the strictly convex function $Q_\sigma(x,\cdot)$ is coercive for a.e. $x\in\Omega$ with respect to $\min\{p,q\}$. Indeed, if $p>q$ and $E\ge E_0$, then, by assumption (A3), we have
\[
\begin{split}
&Q_B(x, E)\ge \underline{Q}\left(\frac{E}{E_0}\right)^p\ge \underline{Q}\left(\frac{E}{E_0}\right)^q\ge \underline{Q}\left[\left(\frac{E}{E_0}\right)^q-1\right]\quad \text{a.e. in } B;\\
&Q_A(x, E)\ge \underline{Q} \left(\frac{E}{E_0}\right)^q\ge \underline{Q}\left[\left(\frac{E}{E_0}\right)^q-1\right] \qquad\qquad\qquad\ \  \text{a.e. in } A.
\end{split}
\]
If $E<E_0$, then 
\[
\begin{split}
&Q_B(x, E)\ge 0
\ge\underline{Q}\left[\left(\frac{E}{E_0}\right)^q-1\right]\quad \text{a.e. in } B;\\
& Q_A(x, E)\ge 0 
\ge\underline{Q}\left[\left(\frac{E}{E_0}\right)^q-1\right]\quad \text{a.e. in } A.
\end{split}
\]
Therefore, setting $Q_\sigma=Q_B$ in $B$ and $Q_\sigma=Q_A$ in A, we have
\[
Q_\sigma(x, E)\ge \underline{Q}\left[\left(\frac{E}{E_0}\right)^q-1\right]\quad \text{a.e. in }\Omega,
\]
for any $E\ge 0$.

Similarly, when $p<q$, we have $Q_\sigma(x,E)\ge \underline{Q}\left[\left(\frac{E}{E_0}\right)^p-1\right]$, for any $E\ge0$.

Therefore, in both cases ($p>q$ and $p<q$), all the assumptions of \cite[Th. 3.30]{dacorogna2007direct} are satisfied and, thus, the solution exists and is unique.
\end{proof}

\begin{rem}
    By invoking \cite[Th. 3.30]{dacorogna2007direct} one finds that the solution is an element of $W^{1,\min\{p,q\}}(\Omega)=W^{1,p}(\Omega)\cup W^{1,q}(\Omega)$. In both cases ($p>q$ and $p<q$) the boundary data $f\in X_\diamond^p(\partial \Omega)$ is compatible with the solution space.
\end{rem}

Let us observe that optimization problems on domains with holes have received a great deal of interest in recent years, see e.g. \cite{della2020optimal,gavitone2021isoperimetric,paoli2020stability,paoli2020sharp} and references therein.

\subsection{Normalized solution}
Through this paper we study the behaviour of the solution of problem \eqref{gminimum} for small Dirichlet boundary data, i.e. the behaviour of $u^\lambda$ defined as:
\begin{equation}
    \label{Fspezzata}
\min_{\substack{u\in W^{1,p}(\Omega)\cup W^{1,q}(\Omega)\\ u=\lambda f\ \text{on}\ \partial \Omega}}\mathbb E_\sigma(u),
\end{equation}
for $\lambda\to 0^+$ (small Dirichlet data).

To this purpose, as discussed in Section \ref{underlying}, it is convenient to introduce the normalized solution $v^\lambda$ defined as:
\begin{equation*}
v^\lambda=\frac{u^\lambda}{\lambda}.
\end{equation*} 

In the following Sections, we prove that the behaviour of the normalized function $v^\lambda$ is $p_0-$Laplace modeled for $\lambda \to 0^+$.

For any prescribed $f\in X^p_\diamond(\partial \Omega)$ and $\lambda>0$, $v^\lambda$ is the solution of the following variational problem:
\begin{equation}\label{G_norm}
\min_{\substack{v\in W^{1,p}(\Omega)\cup W^{1,q}(\Omega)\\ v=f\ \text{on}\ \partial \Omega}}\mathbb G_0^\lambda(v),\quad \mathbb G_0^\lambda(v)=\frac 1 {\lambda^{p_0}}\left(\int_{B} Q_B(x,\lambda|\nabla v(x)|)dx+\int_{A} Q_A(x,\lambda|\nabla v(x)|)dx\right).
\end{equation}
The multiplicative factor $1/\lambda^{p_0}$ is introduced in order to guarantee that the functionals  {$\mathbb G_0^\lambda$} are equibounded for small $\lambda$. The normalized solution makes it possible to \lq\lq transfer\rq\rq\ parameter $\lambda$ in \eqref{Fspezzata} from the boundary data to the functional $\mathbb G_0^\lambda$.

Specifically, in the following Sections, we will prove that $v^\lambda$ converges, under very mild hypotheses, for $\lambda\to 0^+$. 

 {If $q_0<p_0$, the limiting problem of \eqref{G_norm} is a problem where the inner region $A$ is replaced by a PEC. The limit solution is termed $w^0$.}

 {If $p_0<q_0$,  the limiting problem of \eqref{G_norm} is a problem where the inner region $A$ is replaced by a PEI. The limiting problem of $v^\lambda$, is termed $v^0_B$ in $B$
.
}

 {Finally, we remark that $v^0_B$ and $w^0$ arise from a weighted $p_0-$Laplace problem
.}

\section{The fundamental inequality for small Dirichlet data}
\label{mean_sec0}
In this Section we provide the main tool to achieve the convergence results in the limiting cases for \lq\lq small\rq\rq\ Dirichlet boundary data. Specifically, we show that the asymptotic behaviour of the Dirichlet energy corresponds to a $p_0-$Laplace modelled equation \cite{Salo2012_IP,guo2016inverse} in domain $B$.

In the following, we study the asymptotic behaviour of the Dirichlet energy in the outer region $B$. To do this, we prove the following general Lemma, first for a  {weighted} $p_0-$Laplace problem and, then, for the quasilinear case.

Let $Q_F(x,E)=\theta(x)E^{p_0}$ be the Dirichlet energy density for a  {weighted} $p_0-$Laplace problem defined in $F$, a bounded Lipschitz domain. We observe that $Q_F$ satisfies assumption (A4).

\begin{lem}
\label{factorizable_lemma3}
Let $1<p_0<+\infty$, $F
\subset\R^n$ be a bounded domain with Lipschitz boundary, $\theta$ be a nonnegative measurable function  in  $F$ and 
$\{w_n\}_{n\in\N}$ be 
a sequence weakly convergent  to $w$ in $W^{1,p_0}(F)$. 
Then we have  \begin{equation}
\label{only_in_factor_E3}
\int_{F} \theta(x)|\nabla w(x)|^{p_0} dx\le\liminf_{n\to+\infty}\int_{F} \theta(x)|\nabla w_n(x)|^{p_0} dx.
\end{equation}
\end{lem}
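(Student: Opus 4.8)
The plan is to recognize \eqref{only_in_factor_E3} as the sequential weak lower semicontinuity of the convex integral functional $w \mapsto \int_F \theta(x)|\nabla w(x)|^{p_0}\,dx$, whose integrand $(x,\xi) \mapsto \theta(x)|\xi|^{p_0}$ is nonnegative, measurable in $x$ (since $\theta$ is measurable and $\xi\mapsto|\xi|^{p_0}$ is continuous) and, crucially, convex in $\xi$ because $p_0>1$ and $\theta\ge 0$. First I would record that weak convergence $w_n \rightharpoonup w$ in $W^{1,p_0}(F)$ forces $\nabla w_n \rightharpoonup \nabla w$ in $L^{p_0}(F;\R^n)$, the gradient being a bounded linear (hence weakly continuous) operator. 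The engine of the proof is the subgradient inequality for the $C^1$ convex map $\xi\mapsto|\xi|^{p_0}$, namely $|b|^{p_0}\ge|a|^{p_0}+p_0|a|^{p_0-2}a\cdot(b-a)$ for all $a,b\in\R^n$, to be applied pointwise with $a=\nabla w(x)$ and $b=\nabla w_n(x)$.

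The step I expect to be the main obstacle is that $\theta$ is only assumed nonnegative and measurable, not bounded, so the natural multiplier $p_0\,\theta\,|\nabla w|^{p_0-2}\nabla w$ need not belong to the dual space $L^{p_0'}(F;\R^n)$ and cannot be paired directly with the weakly convergent sequence $\nabla w_n - \nabla w$. To circumvent this I would truncate the weight, setting $\theta_k := \min\{\theta,k\}\in L^\infty(F)$ for $k\in\N$, and first establish the inequality for each bounded weight $\theta_k$.

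For fixed $k$, multiplying the subgradient inequality by $\theta_k(x)\ge 0$ and integrating over $F$ yields
\[
\int_F \theta_k|\nabla w_n|^{p_0}\,dx \;\ge\; \int_F \theta_k|\nabla w|^{p_0}\,dx + \int_F g_k\cdot(\nabla w_n-\nabla w)\,dx,
\]
where $g_k := p_0\,\theta_k\,|\nabla w|^{p_0-2}\nabla w$. Since $0\le\theta_k\le k$ and $|\nabla w|^{p_0-1}\in L^{p_0'}(F)$ because $w\in W^{1,p_0}(F)$, we have $g_k\in L^{p_0'}(F;\R^n)$; hence the last integral tends to $0$ as $n\to\infty$ by the weak convergence $\nabla w_n-\nabla w\rightharpoonup 0$ in $L^{p_0}$. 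Taking the $\liminf$ in $n$ gives
\[
\int_F \theta_k|\nabla w|^{p_0}\,dx \;\le\; \liminf_{n\to+\infty}\int_F \theta_k|\nabla w_n|^{p_0}\,dx \;\le\; \liminf_{n\to+\infty}\int_F \theta|\nabla w_n|^{p_0}\,dx,
\]
where the second inequality uses $\theta_k\le\theta$ together with $|\nabla w_n|^{p_0}\ge 0$ and the monotonicity of $\liminf$.

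Finally, I would let $k\to+\infty$. Since $\theta_k\uparrow\theta$ pointwise and $k\mapsto\theta_k|\nabla w|^{p_0}\ge 0$ is nondecreasing, the monotone convergence theorem upgrades the left-hand side to $\int_F \theta|\nabla w|^{p_0}\,dx$, while the right-hand side is independent of $k$; this yields \eqref{only_in_factor_E3} and closes the argument. Alternatively, one could invoke a general weak lower semicontinuity theorem for convex normal integrands in the spirit of \cite{dacorogna2007direct}, but the truncation argument keeps the proof self-contained and makes transparent why no boundedness of $\theta$ is required.
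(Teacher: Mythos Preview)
Your proof is correct, but it follows a different route from the paper's. You exploit the $C^1$ convexity of $\xi\mapsto|\xi|^{p_0}$ directly through the subgradient inequality, truncate the weight $\theta$ to force the multiplier $p_0\,\theta_k|\nabla w|^{p_0-2}\nabla w$ into $L^{p_0'}$, pair it with the weakly null sequence $\nabla w_n-\nabla w$, and then recover the full weight by monotone convergence. The paper instead passes to a subsequence realizing the $\liminf$, invokes Mazur's lemma to obtain convex combinations $z_n$ converging \emph{strongly} in $W^{1,p_0}(F)$, extracts a further subsequence with $\nabla z_{n_l}\to\nabla w$ a.e., applies Fatou's lemma to $\theta|\nabla z_{n_l}|^{p_0}$, and finally uses the convexity of $|\cdot|^{p_0}$ to bound each $\int_F\theta|\nabla z_n|^{p_0}$ by the corresponding convex combination of the original integrals. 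Your argument is more direct and self-contained for this particular integrand, and the truncation device handles the possibly unbounded $\theta$ very cleanly; the paper's Mazur--Fatou approach has the advantage of not relying on differentiability of the integrand and would apply verbatim to any nonnegative convex integrand in the gradient variable.
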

\begin{proof}
Let us set $L:=\liminf_{n\to+\infty}\int_{F} \theta(x)|\nabla w_n(x)|^{p_0} dx$.  {If $L=+\infty$, the inequality \eqref{only_in_factor_E3} is trivial; otherwise we} consider a subsequence  {$\{n_j\}_{n\in\N}$} such that
\[
\lim_{ j\to+\infty}\int_{F} \theta(x)|\nabla w_{ {n_j}}(x)|^{p_0} dx=L.
\]
This means that for any $\varepsilon>0$, there exists $\nu\in\N$ such that
\begin{equation}
    \label{defin_liminf}
L-\varepsilon<\int_{F} \theta(x)|\nabla w_{ {n_j}}(x)|^{p_0} dx<L+\varepsilon
\end{equation}
for any $ j\ge\nu$. 
Then, by the Mazur's Lemma (refer for example to
\cite{dunford1963linear,renardy2006introduction}), there exists a function $N:\N\to\N$ and a sequence $\{\alpha_{n,k}\}_{k=n}^{N(n)}$ for any $n\in\N$ such that
\begin{enumerate}
    \item[(M1)] $\alpha_{n,k}\geq 0$\ for any $(n,k)\in\N\times[n,N(n)]$, 
    \item[(M2)] $\sum_{k=n}^{N(n)}\alpha_{n,k}=1$\ for any $n\in\N$,
    \item[(M3)] $z_n:=\sum_{k=n}^{N(n)}\alpha_{n,k}w_{ {n_k}}$ $\to w$ in $W^{1,p_0}(F)$.
\end{enumerate}
Then there exists a subsequence $\{z_{ {n_l}}\}_{ l\in\N}$ such that
\begin{equation}
    \label{eguagl_liminf}
\liminf_{n\to+\infty} \int_{F} \theta(x)|\nabla z_n(x)|^{p_0} dx=
\lim_{ l\to+\infty}\int_{F} \theta(x)|\nabla z_{ {n_l}}(x)|^{p_0} dx
\end{equation}
and another subsequence, again indicated with $\{z_{ {n_l}}\}_{ l\in\N}$,
such that $\nabla z_{ {n_l}}\to \nabla w$ a.e. in  {$F$} \cite[Chap. 18]{leoni17}. Therefore, we have
\begin{equation*}
    \begin{split}
\mathbb \int_{F} \theta(x)|\nabla w(x)|^{p_0} dx&=  \int_{F} \theta(x)\liminf_{ l\to+\infty}|\nabla z_{ {n_l}}(x)|^{p_0} dx\leq
\liminf_{ {l}\to+\infty}\int_{F} \theta(x)|\nabla z_{ {n_l}}(x)|^{p_0} dx\\
&=\lim_{ l\to+\infty}\int_{F} \theta(x)|\nabla z_{ {n_l}}(x)|^{p_0} dx =\liminf_{n\to+\infty}\int_{F} \theta(x)|\nabla z_n(x)|^{p_0} dx\\
&\le\liminf_{n\to+\infty}\sum_{k=n}^{N(n)}\alpha_{n,k}\int_{F} \theta(x)|\nabla w_{ {n_k}}(x)|^{p_0} dx\\
&<\liminf_{n\to+\infty}\mathbb   \sum_{k=n}^{N(n)}\alpha_{n,k}\left(L+\varepsilon\right)=L+\varepsilon\\
&=\liminf_{n\to+\infty}\int_{F} \theta(x)|\nabla w_n(x)|^{p_0} dx+\varepsilon,
\end{split}
\end{equation*}
where in the first line the equality follows from the convergence result of (M3) and the inequality follows from Fatou's Lemma, in the second line we applied \eqref{eguagl_liminf}, in the third line we applied the convexity of $|\cdot|^{p_0}$, and in the fourth line we applied \eqref{defin_liminf}.

Conclusion \eqref{only_in_factor_E3} follows from the arbitrariness of $\varepsilon>0$.
\end{proof}

The next step consists in extending \eqref{only_in_factor_E3} from the  {weighted} $p_0-$Laplace case to the quasilinear case. In doing this, we restrict the validity of the result to sequences of the solutions of problem \eqref{G_norm}. The main difficulty in proving this result lies in evaluating an upper bound of the measure of that part of $B$ where the solutions  {$v^\lambda$} admit large values of the gradient (see Figure \ref{fig_7_insiemi}).
\begin{figure}[ht]
    \centering
    \includegraphics[width=\textwidth]{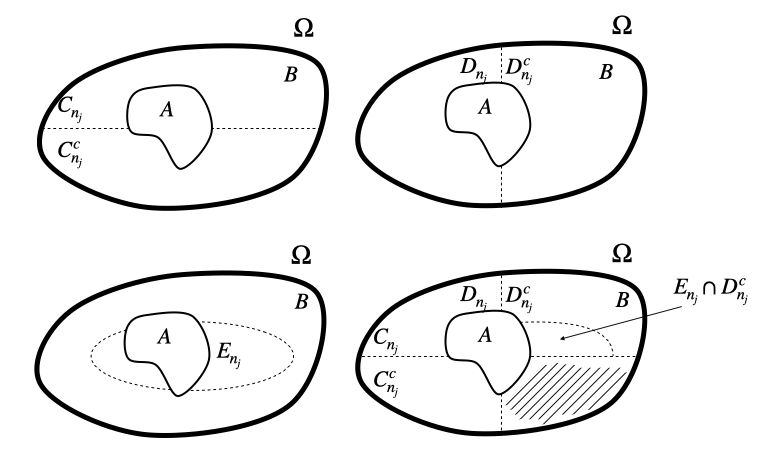}
    \caption{The objective of the proof is to show that the set of the points in $B$ such that the solution $v^{\lambda_{n_j}}$ does not satisfy the fundamental inequality ($C^c_{\delta,n_j}$) and admit large values of the gradient ($D^c_{\ell,n_j}$) can be made sufficiently small (shaded region).}
    \label{fig_7_insiemi}
\end{figure}

\begin{lem}
\label{lem_dis_fund_0}
Let $1<p_0\leq p<+\infty$, $f\in X^p_\diamond(\partial \Omega)$, (A1), (A2), (A3), (A4) hold, and let the solution $v^\lambda$ of \eqref{G_norm} be weakly convergent to $v$ in $W^{1,p_0}(B)$, for $\lambda\to 0^+$. Let $\lambda_n\to 0^+$ be a decreasing sequence for $n\to+\infty$, such that
\begin{equation}
    \label{seq_to_liminf_0}
\lim_{n\to+\infty}\frac{1}{\lambda_n^{p_0}}\int_{B}Q_B(x,\lambda_n|\nabla v^{\lambda_n}(x)|)dx=\liminf_{\lambda\to0^+}\frac{1}{\lambda^{p_0}}\int_{B}Q_B(x,\lambda|\nabla v^{\lambda}(x)|)dx.
\end{equation}
Then, for any $\delta>0$ and $\theta>0$, there exists a set $F_{\delta,\theta}\subseteq B$ with $|B\setminus F_{\delta,\theta}|<\theta$ such that
\begin{equation}
\label{step_to_conclude_0}
\liminf_{n\to+ \infty}\int_{F_{\delta,\theta}} (\beta_0(x)-\delta)|\nabla v^{\lambda_n}(x)|^{p_0} dx\le \lim_{n\to+\infty}\frac{1}{\lambda_n^{p_0}}\int_{B}Q_B(x,\lambda_n|\nabla v^{\lambda_n}(x)|)dx.
\end{equation}
\end{lem}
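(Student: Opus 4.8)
The plan is to convert the pointwise asymptotic information in (A4) into the integrated inequality \eqref{step_to_conclude_0}, the only genuine difficulty being the region where $\lambda_n|\nabla v^{\lambda_n}|$ is \emph{not} small, so that (A4) cannot be invoked there. First I would record that weak convergence of $v^\lambda$ in $W^{1,p_0}(B)$ makes the sequence bounded, whence $\int_B|\nabla v^{\lambda_n}|^{p_0}\,dx\le M$ with $M$ independent of $n$. Combined with $\lambda_n\to 0^+$, a Chebyshev (Markov) estimate gives, for every $\eps>0$,
\[
|\{x\in B:\ \lambda_n|\nabla v^{\lambda_n}(x)|>\eps\}|\le \frac{M\lambda_n^{p_0}}{\eps^{p_0}}\To 0,
\]
so that $\lambda_n|\nabla v^{\lambda_n}|\to 0$ in measure on $B$ (recall $|B|<+\infty$). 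Passing to a subsequence $\{\lambda_{n_j}\}$ I may assume $\lambda_{n_j}|\nabla v^{\lambda_{n_j}}|\to 0$ a.e. in $B$, and then by Egorov's theorem there is a set $E_\theta\subseteq B$ with $|B\setminus E_\theta|<\theta/2$ on which this convergence is uniform.

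Next I would encode (A4) through the sets
\[
B_{\delta,\ell}:=\Big\{x\in B:\ Q_B(x,s)\ge(\beta_0(x)-\delta)s^{p_0}\ \text{for all}\ s\in[0,1/\ell]\Big\},\qquad \ell\in\N.
\]
These are measurable by (A1)--(A2) (the condition may be tested on rational $s$ by continuity), increasing in $\ell$, and by (A4) together with $Q_B(x,0)=0$ almost every $x\in B$ lies in $B_{\delta,\ell}$ for $\ell$ large; hence $|B\setminus B_{\delta,\ell}|\to0$ and I can fix $\ell$ with $|B\setminus B_{\delta,\ell}|<\theta/2$. Setting $F_{\delta,\theta}:=B_{\delta,\ell}\cap E_\theta$ gives $|B\setminus F_{\delta,\theta}|<\theta$, as required.

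The point of intersecting the two sets is that on $F_{\delta,\theta}$ the two obstructions vanish simultaneously for $j$ large: uniform convergence on $E_\theta$ yields $\lambda_{n_j}|\nabla v^{\lambda_{n_j}}(x)|<1/\ell$ for all $x\in F_{\delta,\theta}$ once $j\ge J$, and membership in $B_{\delta,\ell}$ then licenses the pointwise bound $Q_B(x,\lambda_{n_j}|\nabla v^{\lambda_{n_j}}|)\ge(\beta_0(x)-\delta)(\lambda_{n_j}|\nabla v^{\lambda_{n_j}}|)^{p_0}$. Dividing by $\lambda_{n_j}^{p_0}$, integrating over $F_{\delta,\theta}$, and discarding the remaining nonnegative contribution of $Q_B$ on $B\setminus F_{\delta,\theta}$ gives, for $j\ge J$,
\[
\int_{F_{\delta,\theta}}(\beta_0(x)-\delta)|\nabla v^{\lambda_{n_j}}(x)|^{p_0}\,dx\le \frac{1}{\lambda_{n_j}^{p_0}}\int_{B}Q_B(x,\lambda_{n_j}|\nabla v^{\lambda_{n_j}}(x)|)\,dx.
\]
Taking $\liminf_{j}$ and using \eqref{seq_to_liminf_0} (so the right-hand side converges to the prescribed limit along the subsequence as well), together with the elementary fact $\liminf_n a_n\le\liminf_j a_{n_j}$ applied to the left-hand side, yields \eqref{step_to_conclude_0} for the original sequence.

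I expect the main obstacle to be precisely the handling of the large-gradient set: a crude global bound from (A3), $Q_B(x,E)\ge \underline{Q}(E/E_0)^{p_0}$, would only produce the non-sharp constant $\underline{Q}/E_0^{p_0}$ instead of the sharp $\beta_0(x)-\delta$ coming from (A4). The device that keeps the sharp constant alive is the passage to convergence in measure (afforded by the $L^{p_0}$ bound and $\lambda_n\to0$), upgraded to uniform smallness of $\lambda_n|\nabla v^{\lambda_n}|$ on a fixed large set via Egorov; this is what lets the purely pointwise threshold of (A4) act uniformly on $F_{\delta,\theta}$. A minor point to verify is that working along an a.e.-convergent subsequence does not weaken the conclusion, which holds because the $\liminf$ of the full sequence is dominated by that of any subsequence, while the right-hand side, being a genuine limit, is unaffected by subsequence extraction.
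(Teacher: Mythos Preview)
Your proof is correct and follows essentially the same approach as the paper's: both use the $L^{p_0}$ bound on $\nabla v^{\lambda_n}$ together with a Chebyshev-type estimate to control the large-gradient region, intersect with the exhausting family of ``good'' sets furnished by (A4), and pass to a subsequence before concluding. The only cosmetic difference is that where the paper builds $F_{\delta,\theta}$ by hand as an intersection $\bigcap_j C_{n_j}$ along a subsequence chosen so that $|C_{n_j}^c|\le\theta/2^j$, you package this step via Egorov's theorem after upgrading convergence in measure to a.e.\ convergence---a slight streamlining, but the underlying mechanism is identical.
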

\begin{proof}
Let $ {w^0}$ be the solution of problem  {\eqref{pproblem_Bgrad0}}. We have
\begin{equation*}
\begin{split}
\frac{\underline{Q}}{E_0^{p_0}}\int_{B}|\nabla v^\lambda(x)|^{p_0}dx&\leq\frac 1 {\lambda^{p_0}}\int_{B} Q_B(x,\lambda|\nabla v^\lambda(x)|)dx\leq \mathbb{G}_0^\lambda(v^\lambda)\leq\mathbb{G}_0^\lambda( {w^0})\\
&=\frac 1 {\lambda^{p_0}}\int_{B} Q_B(x,\lambda|\nabla  {w^0}(x)|)dx\\
&\leq\max\left\{\frac{\overline Q}{E_0^{p_0}}\int_B|\nabla  {w^0}(x)|^{p_0}dx , \frac{\overline Q}{E_0^p}\lambda^{p-p_0}\int_B|\nabla  {w^0}(x)|^pdx\right\},
\end{split}
\end{equation*}
where in the first inequality we used the left-hand side of (A3.i), in the second inequality we added the integral term on $A$, in the third inequality we tested $\mathbb G_0^\lambda$ with $ {w^0}$, and in the last inequality we used the right-hand side (A3.i).
 
Since $p-p_0\geq 0$, we find that
$\int_{B}|\nabla v^\lambda(x)|^{p_0}dx$ is definitively upper bounded and, therefore, $
\int_{B} |\nabla v^{\lambda_n}(x)|^{p_0}dx$ is upper bounded by a constant $M>0$, for any $n\in\N$.

Let us fix  $\delta>0$ and $\theta>0$. For any $n\in\N$, we set
\begin{equation}
    \label{def_C0}
C_{\delta,n}=C_{n}:=\left\{x\in B\ : \ (\beta_0(x)-\delta)(\lambda_n|\nabla v^{\lambda_n}(x)|)^{p_0}\le Q_B(x,\lambda_n|\nabla v^{\lambda_n}(x)|)\right\}.
\end{equation}
Now, for any constant $L>0$, we set
\begin{equation*}
\begin{split}
D_{n,L}=D_{n}:=\{ x\in B \ : \ |\nabla v^{\lambda_n}(x)|< L\}\\
D^c_{n,L}=D^c_{n}:=\{ x\in B \ : \ |\nabla v^{\lambda_n}(x)|\ge  L\}
\end{split}
\end{equation*}
By definition of $D^c_{n}$, we have
\[
|D^c_{n}|L^{p_0}\le
\int_{B} |\nabla v^{\lambda_n}(x)|^{p_0}dx
\le M,
\]
which gives $|D^c_n|\leq \frac{M}{
L^{p_0}}$, for any $n\in\N$. By choosing $L>\left(\frac{4M}{\theta}\right)^\frac{1}{p_0}$, we have
\begin{equation}
\label{stimaD3_0}
|D^c_n|<\frac\theta 4.
\end{equation}
Let $E_{n}$ be defined as
\[
E_{\delta,L,n}=E_{n}:=\left\{x\in B\ : \ (\beta_0(x)-\delta)E^{p_0}\leq {Q_B(x,E)}\ \  \forall\ 0\leq E<\lambda_nL\right\}.
\]
Then, for any $n\in\N$,
\begin{equation}
\label{chain_inclusion3_0}
C_{n}\supseteq C_{n}\cap D_{n}\supseteq E_{n}\cap D_{n}.
\end{equation}

$E_n$ is increasing with respect to $n$ and $\left| \bigcup_{n=1}^{+\infty}E_{n}\right|=|B|$. Therefore there exists a natural number $n_1=n_1(\theta)$ such that
\begin{equation}
    \label{stimaF3_0}
\left|\bigcup_{n=1}^{n_1} E_{n}\right|=\left|E_{n_1}\right|\ge |B|-\frac \theta 4.
\end{equation}
By considering the complementary sets in $B$, \eqref{chain_inclusion3_0} for $n=n_1$ gives
\[
C^c_{n_1}\subseteq E^c_{n_1}\cup D^c_{n_1}=E^c_{n_1}\cup D^c_{n_1}\quad
\]
with
\[
\quad |C_{n_1}^c|\leq \frac\theta 4+\frac\theta 4=\frac\theta 2,
\]
as follows from \eqref{stimaD3_0} and \eqref{stimaF3_0}. Similarly,  {repeating the argument and replacing $2^{j+1}$ with $4$ in the previous argument}, we construct a subsequence $\{\lambda_{n_j}\}_{j\in\N}$ such that
\[
|C_{n_j}^c|\leq \frac{\theta}{2^j}.
\]
Then, by defining
\[
F_{\delta,\theta}: =\bigcap_{j=1}^\infty C_{n_j}, 
\]
we have
\[
F^c_{\delta,\theta}= \bigcup_{j=1}^\infty C_{n_j}^c
\quad
\]
with $|F^c_{\delta,\theta}|\leq \theta\sum_{j=1}^{+\infty} {2^{-j}}=\theta$, which means
\[
|B |-\theta\le |F_{\delta,\theta}|\le |B|.
\]
Therefore, we have
\begin{equation*}
\begin{split}
&\liminf_{j\to+\infty}\int_{F_{\delta,\theta}}(\beta_0(x)-\delta)|\nabla v^{\lambda_{n_j}}(x)|^{p_0} dx\le \liminf_{j\to+\infty}\frac{1}{\lambda_{n_j}^{p_0}}\int_{F_{\delta,\theta}} Q_B(x,|\lambda_{n_j}\nabla v^{\lambda_{n_j}}(x)|)dx\\
&\le\liminf_{j\to +\infty}\frac{1}{\lambda_{n_j}^{p_0}}\int_{B} Q_B(x,|\lambda_{n_j}\nabla v^{\lambda_{n_j}}(x)|)dx=\lim_{n\to+\infty}\frac{1}{\lambda_n^{p_0}}\int_{B}Q_B(x,\lambda_n|\nabla v^{\lambda_n}(x)|)dx,
\end{split}
\end{equation*}
where in the first inequality we use $F_{\delta,\theta}\subseteq C_{n_j}$ for any $j\in\N$ and the inequality appearing in \eqref{def_C0}, in the second inequality we take into account that $F_{\delta,\theta}\subseteq B$, and, in the last equality, we exploit the convergence given by \eqref{seq_to_liminf_0}.
\end{proof}

Finally, we prove the result on the fundamental inequality holding for $\lambda\to 0^+$.
\begin{prop}
\label{fund_ine_propt0}
Let $1<p_0\le p<+\infty$, $f\in X^p_\diamond(\partial \Omega)$, (A1), (A2), (A3), (A4) hold, and let the solution $v^\lambda$ of \eqref{G_norm} be weakly convergent to $v$ in $W^{1,p_0}(B)$, as $\lambda\to 0^+$. Then \begin{equation}\label{fundamental_inequality3_0}
\int_{B}\beta_0(x)|\nabla v|^{p_0}dx\le
\liminf_{\lambda\to 0^+}\frac 1 {\lambda^{p_0}}\int_{B} Q_B(x,\lambda|\nabla v^\lambda(x)|)dx.
\end{equation}
\end{prop}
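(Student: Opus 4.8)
The plan is to assemble the two results already established, namely the weighted weak lower semicontinuity of Lemma \ref{factorizable_lemma3} and the measure-theoretic localization of Lemma \ref{lem_dis_fund_0}, and then to remove the two auxiliary parameters $\delta$ and $\theta$ by elementary limiting arguments. Since the right-hand side of \eqref{fundamental_inequality3_0} is a $\liminf$, I would first fix a decreasing sequence $\lambda_n\to 0^+$ realizing it, i.e. satisfying \eqref{seq_to_liminf_0}; because the whole family $v^\lambda$ converges weakly to $v$ in $W^{1,p_0}(B)$ as $\lambda\to 0^+$, the extracted sequence $v^{\lambda_n}$ still converges weakly to $v$ in $W^{1,p_0}(B)$. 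A point to keep in mind from the outset is the sign of the weight: by (A4) one has $\beta_0\in L^\infty_+(B)$, hence $\beta_0\ge c_0>0$ a.e.\ in $B$, so I would restrict attention to $0<\delta<c_0$, which guarantees that $\beta_0-\delta$ is a nonnegative (indeed strictly positive) measurable weight to which Lemma \ref{factorizable_lemma3} applies.

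Next, for such a $\delta$ and for an arbitrary $\theta>0$, I would invoke Lemma \ref{lem_dis_fund_0} to produce the set $F_{\delta,\theta}\subseteq B$ with $|B\setminus F_{\delta,\theta}|<\theta$ for which \eqref{step_to_conclude_0} holds. I would then apply Lemma \ref{factorizable_lemma3} on $B$ with the weight $(\beta_0-\delta)\chi_{F_{\delta,\theta}}$ (extended by zero outside $F_{\delta,\theta}$, which is nonnegative and measurable) to the weakly convergent sequence $v^{\lambda_n}\rightharpoonup v$, obtaining
\[
\int_{F_{\delta,\theta}}(\beta_0(x)-\delta)|\nabla v(x)|^{p_0}\,dx\le\liminf_{n\to+\infty}\int_{F_{\delta,\theta}}(\beta_0(x)-\delta)|\nabla v^{\lambda_n}(x)|^{p_0}\,dx.
\]
Chaining this with \eqref{step_to_conclude_0} and using the defining property \eqref{seq_to_liminf_0} of $\lambda_n$ yields, for every admissible $\delta$ and every $\theta>0$,
\[
\int_{F_{\delta,\theta}}(\beta_0(x)-\delta)|\nabla v(x)|^{p_0}\,dx\le\liminf_{\lambda\to 0^+}\frac{1}{\lambda^{p_0}}\int_{B}Q_B(x,\lambda|\nabla v^\lambda(x)|)\,dx.
\]

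Finally I would strip off the two parameters in order. Sending $\theta\to 0^+$: since $v\in W^{1,p_0}(B)$ the integrand $(\beta_0-\delta)|\nabla v|^{p_0}$ is in $L^1(B)$, and because $|B\setminus F_{\delta,\theta}|<\theta\to 0$, absolute continuity of the Lebesgue integral forces $\int_{F_{\delta,\theta}}(\beta_0-\delta)|\nabla v|^{p_0}\,dx\to\int_{B}(\beta_0-\delta)|\nabla v|^{p_0}\,dx$; as the right-hand side above does not depend on $\theta$, this gives the same bound with $F_{\delta,\theta}$ replaced by $B$. Sending then $\delta\to 0^+$: as $\delta\downarrow 0$ one has $(\beta_0-\delta)|\nabla v|^{p_0}\uparrow\beta_0|\nabla v|^{p_0}$, so by the monotone convergence theorem $\int_B(\beta_0-\delta)|\nabla v|^{p_0}\,dx\uparrow\int_B\beta_0|\nabla v|^{p_0}\,dx$, and since each term is bounded by the fixed right-hand side, so is the limit, which is exactly \eqref{fundamental_inequality3_0}. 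The genuinely hard work has already been absorbed into Lemma \ref{lem_dis_fund_0} (the control of the bad set where the gradients are large and the Dirichlet density fails the lower $\beta_0$-bound); the remaining difficulty here is merely bookkeeping, namely keeping the weight nonnegative by imposing $\delta<c_0$ and handling the iterated limit $\theta\to 0$ followed by $\delta\to 0$ in the correct order.
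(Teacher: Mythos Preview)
Your proof is correct and follows essentially the same approach as the paper: both combine Lemma~\ref{factorizable_lemma3} (applied on $F_{\delta,\theta}$ with the nonnegative weight $\beta_0-\delta$) with the localization inequality \eqref{step_to_conclude_0} of Lemma~\ref{lem_dis_fund_0}, and then eliminate the auxiliary parameters. The only cosmetic difference is that the paper packages the removal of $\delta$ and $\theta$ into a single $\varepsilon$-argument (choosing $\delta$ and $\theta$ simultaneously in terms of $\varepsilon$), whereas you take the equivalent iterated limit $\theta\to 0^+$ then $\delta\to 0^+$ via absolute continuity and monotone convergence; your device of applying Lemma~\ref{factorizable_lemma3} on $B$ with the weight $(\beta_0-\delta)\chi_{F_{\delta,\theta}}$ is in fact slightly cleaner than the paper's direct application on $F_{\delta,\theta}$.
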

\begin{proof}
First, we assume that $v$ is nonconstant in $B$, otherwise the conclusion is trivial. Therefore, the integral on the l.h.s. of \eqref{fundamental_inequality3_0} is positive because $\beta_0\in L^\infty_+(B)$.

Let $\{\lambda_n\}_{n \in \N}$ be a decreasing sequence such that $\lambda_n\to 0^+$ for $n\to+\infty$, and
\begin{equation}
    \label{seq_to_liminf3_0}
\lim_{n\to+\infty}\frac{1}{\lambda_n^{p_0}}\int_{B}Q_B(x,\lambda_n|\nabla v^{\lambda_n}(x)|)dx=\liminf_{\lambda\to0^+}\frac{1}{\lambda^{p_0}}\int_{B}Q_B(x,\lambda|\nabla v^{\lambda}(x)|)dx.
\end{equation}
To prove \eqref{fundamental_inequality3_0}, we use Lemma \ref{lem_dis_fund_0}. To this purpose, the measure
\begin{equation*}
\kappa: F\in  {\mathcal B(\Omega)}\mapsto \int_F\beta_0(x)|\nabla v(x)|^{p_0}dx
\end{equation*}
 {where $\mathcal B(\Omega)$ is the class of the borelian sets contained in $\Omega$,}
is absolutely continuous with respect to the Lebesgue measure. Therefore, for any $\varepsilon>0$, there exists $\theta>0$ such that $\kappa(F)<\frac\varepsilon 2$ for any $|F|<\theta$.
Therefore,  {for any fixed $0<\delta\le\inf_{B}\beta_0$},
$|B\setminus F_{\delta,\theta}|<\theta$ implies 
\begin{equation}
        \label{mu3_0}
\kappa(B\setminus F_{\delta,\theta})=\int_{B\setminus F_{\delta,\theta}}\beta_0(x)|\nabla v(x)|^{p_0}dx <\frac \varepsilon 2.
\end{equation}
Hence, for  $\delta\le \min\left\{\varepsilon/\left(2\int_{B}|\nabla v(x)|^{p_0} dx\right), \inf_B \beta_0(x)\right\}$, we have
\begin{equation*}
\begin{split}
&\int_{B} \beta_0(x)|\nabla v(x)|^{p_0} dx-\varepsilon<\int_{B} \beta_0(x)|\nabla v(x)|^{p_0} dx-\int_{B\setminus F_{\delta,\theta}} \beta_0(x)|\nabla v(x)|^{p_0} dx-\frac \varepsilon 2\\
&\le\int_{F_{\delta,\theta}} \beta_0(x)|\nabla v(x)|^{p_0} dx- \delta \int_{B} |\nabla v(x)|^{p_0} dx\le \int_{F_{\delta,\theta}} (\beta_0(x)-\delta)|\nabla v(x)|^{p_0} dx\\
&\le \liminf_{\lambda\to 0^+}\int_{F_{\delta,\theta}} (\beta_0(x)-\delta)|\nabla v^{\lambda}(x)|^{p_0} dx \le\lim_{n\to+\infty}\frac{1}{\lambda_n^{p_0}}\int_{B}Q_B(x,\lambda_n|\nabla v^{\lambda_n}(x)|)dx\\
&=\liminf_{\lambda\to 0^+}\frac{1}{\lambda^{p_0}}\int_{B}Q_B(x,\lambda|\nabla v^{\lambda}(x)|)dx,
\end{split}
\end{equation*}
where in the first line we applied \eqref{mu3_0}, in the second line the connection between $\delta$ and $\varepsilon$, in the third line we used Lemma \ref{factorizable_lemma3} with $F= F_{\delta,\theta}$ and the inequality \eqref{step_to_conclude_0} of the previous Lemma \ref{lem_dis_fund_0}, and in the fourth line we used \eqref{seq_to_liminf3_0}.


The conclusion follows from the arbitrariness of $\varepsilon$.
\end{proof}

\section{Limiting Problems for small Dirichlet data}\label{small_sec}

In this Section we treat the limiting case of problem \eqref{Fspezzata} for small Dirichlet boundary data. 

We will distinguish two cases depending on $p_0$ and $q_0$:
\begin{enumerate}
    \item[(i)] $1<q_0<p_0<+\infty$ (see Section \ref{Small_bigger});
    \item[(ii)] $1<p_0<q_0<+\infty$ (see Section \ref{Small_smaller}).
\end{enumerate}
In the first case, we prove that: 

\begin{itemize}
\item[(i.a)] $v^\lambda\rightharpoonup w
^0$ in $W^{1,p_0}(B)$, as $\lambda\to 0^+$, where $w^0$ in $B$ is the unique solution of problem \eqref{pproblem_Bgrad0};
\item[(i.b)] $v^\lambda\to w^0$ in $W^{1,q_0}(A)$, as $\lambda\to 0^+$, where $w^0$ is constant in any connected component of $A$;
\item[(i.c)] $v^\lambda\rightharpoonup w
^0$ in $W^{1,p_0}(\Omega)$, as $\lambda\to 0^+$, where $w^0$ in $B$ is the unique solution of problem \eqref{pproblem_Bgrad0} and in $A$ is constant in any connected component.
\end{itemize}
The limiting solution in $\Omega$, for $1<q_0<p_0<+\infty$, is characterized by:
\begin{equation}
\label{H}
\min_{\substack{v\in W^{1,p_0}(\Omega)\\ |\nabla v|=0\ \text{a.e. in}\ A \\ v=f\ \text{on}\ \partial \Omega}}\mathbb B_0(v),\quad \mathbb B_0(v)=\int_{B} \beta_0(x)|\nabla v(x)|^{p_0} dx.
\end{equation}
Problem \eqref{H} is the variational form of problem \eqref{pproblem_Bgrad0}.

Whereas, in the second case, we prove that:
\begin{itemize}
\item[(ii
)] $v^\lambda\rightharpoonup 
v^0_B$ in $W^{1,p_0}(B)$, as $\lambda\to 0^+$, where $v^0_B$ in $B$ is the unique solution of problem \eqref{pproblem_B0}.
\end{itemize}

For $1<p_0<q_0<+\infty$, the limiting solution is characterized by the following problem:
\begin{equation}
    \label{Hii}
\min_{\substack{v\in W^{1,p_0}(B)\\ v=f\ \text{on}\ \partial \Omega}}\mathbb B_0(v),\quad \mathbb B_0(v)=\int_{B} \beta_0(x)|\nabla v(x)|^{p_0}dx
\end{equation}
The problem \eqref{Hii} is the variational form of \eqref{pproblem_B0}.
 {We recall that $v_B^0\in W^{1,p_0}(B)$ 
is the unique normalized solutions of the limiting problems  {\eqref{Hii}
} in region $B$
}.

Using the results developed in Section \ref{mean_sec0}, we are in a position to prove the main convergence results.
\subsection{First case: $\mathbf{q_0<p_0}$
}
\label{Small_bigger}
 {In the whole section we assume $q_0< p_0\le p$ and $q_0\leq q$; hence we have the continuous embeddings $W^{1,p}(\cdot)\hookrightarrow W^{1,p_0}(\cdot)\hookrightarrow W^{1,q_0}(\cdot)$ and $W^{1,q}(\cdot)\hookrightarrow W^{1,q_0}(\cdot)$, for any bounded set with Lipschitz boundary.}

For any fixed $f\in X^{p}_\diamond(\partial \Omega)$ we study problem \eqref{G_norm} as $\lambda$ approaches zero. The variational problem  \eqref{G_norm} particularizes as
\begin{equation}
\label{G^t}
\min_{\substack{v\in W^{1,q_0}(\Omega)\\ v=f\ \text{on}\ \partial \Omega}}\mathbb G_0^\lambda(v),\quad \mathbb G_0^\lambda(v)=\frac 1 {\lambda^{p_0}}\left(\int_{B} Q_B(x,\lambda|\nabla v(x)|)dx+\int_{A} Q_A(x,\lambda|\nabla v(x)|)dx\right).
\end{equation}

Let $v^\lambda$ be the minimizer of \eqref{G^t} and $ {w^0}$ be the minimizer of \eqref{H}; the aim of this Section is to prove the following convergence result 
\begin{equation*} 
v^\lambda\rightharpoonup 
 {w^0}\quad \text{in}\ W^{1,q_0}(\Omega) \quad\text{as}\ \lambda\to 0^+.
\end{equation*}

The condition $|\nabla v|=0$ is equivalent to saying that $v$ is constant on each connected component of $A$. This makes it possible to decouple the problems associated to regions $B$ and $A$. Specifically, region $A$ behaves as a PEC, with respect to problem \eqref{H}, whereas the outer region $B$ behaves as $p-$Laplacian modelled material with a PEC on $\partial A$.

 {
We first prove that $v^\lambda$ is weakly convergent and we identify the limiting function $v^0\in W^{1,q_0}(\Omega)$, for $\lambda \to 0^+$; proving that the limiting function $v^0$ satisfies  $\mathbb B_0(v^0)=\mathbb B_0(w^0)$}. The latter equality implies $v^0= {w^0}$, because of the uniqueness of the solution of problem \eqref{H}. Then, assuming stronger hypotheses, we prove the strong convergence in $W^{1,q_0}(\Omega)$.

\begin{thm}\label{Thm_conv_lim}
Let $1<q_0<p_0<+\infty$ be such that $p_0\leq p$, $q_0\leq q$, $f\in X^{p}_\diamond(\partial \Omega)$ and $v^\lambda$ be the solution of \eqref{G^t}. If (A1), (A2), (A3) and (A4) hold, then
\begin{itemize}
\item[(i)] $v^\lambda\rightharpoonup  {w^0}$ in $W^{1,p_0}(B)$, as $\lambda\to 0^+$,
\item[(ii)] $v^\lambda\to  {w^0}$ in $W^{1,q_0}(A)$, as $\lambda\to 0^+$,
\item[(iii)] $v^\lambda\rightharpoonup  {w^0}$ in $W^{1,q_0}(\Omega)$, as $\lambda\to 0^+$,
\end{itemize}
where $ {w^0}\in W^{1,p_0}(\Omega)$
is the unique solution of
\eqref{H}.
\end{thm}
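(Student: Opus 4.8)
The plan is a compactness-plus-$\Gamma$-convergence argument: produce uniform bounds on the normalized solutions $v^\lambda$, extract weak limits, show every limit is admissible for \eqref{H}, and identify it with $w^0$ by matching energies and invoking uniqueness.

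First, for compactness, I would fix an admissible competitor for \eqref{G^t}: a function $\psi\in W^{1,p}(\Omega)$ that is constant on each connected component of $A$ with $\psi=f$ on $\partial\Omega$ (such a $\psi$ is produced in the existence theorem through the inverse trace inequality). Since $|\nabla\psi|=0$ in $A$, minimality of $v^\lambda$ gives $\mathbb{G}_0^\lambda(v^\lambda)\le\mathbb{G}_0^\lambda(\psi)=\lambda^{-p_0}\int_B Q_B(x,\lambda|\nabla\psi|)\,dx$, whose right-hand side stays bounded as $\lambda\to0^+$ by the upper bound in (A3.i) together with $p\ge p_0$. The lower bounds in (A3.i)--(A3.ii) then yield $\int_B|\nabla v^\lambda|^{p_0}\,dx\le C$ and $\int_A|\nabla v^\lambda|^{q_0}\,dx\le C\lambda^{p_0-q_0}$. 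Since $q_0<p_0\le p$, the embedding $W^{1,p_0}(B)\hookrightarrow W^{1,q_0}(B)$ and the fixed boundary trace give a uniform $W^{1,q_0}(\Omega)$ bound, so from an arbitrary sequence $\lambda_n\to0^+$ I can extract a subsequence with $v^{\lambda_n}\rightharpoonup v^0$ in $W^{1,p_0}(B)$ and in $W^{1,q_0}(\Omega)$, where $v^0=f$ on $\partial\Omega$ because the affine constraint set is weakly closed.

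The estimate $\int_A|\nabla v^\lambda|^{q_0}\,dx\le C\lambda^{p_0-q_0}\to0$ is the step where the sign $q_0<p_0$ is decisive: it forces $\nabla v^{\lambda_n}\to0$ strongly in $L^{q_0}(A)$, hence $\nabla v^0=0$ a.e. in $A$, so $v^0$ is constant on each connected component of $A$ and is admissible for \eqref{H}. Coupling this strong $L^{q_0}$ convergence of gradients with the compact embedding $W^{1,q_0}(A)\hookrightarrow\hookrightarrow L^{q_0}(A)$ upgrades the convergence on $A$ to strong convergence in $W^{1,q_0}(A)$, which delivers (ii) once the limit is identified. To identify $v^0=w^0$, I would use Proposition \ref{fund_ine_propt0} (whose conclusion is a weak lower-semicontinuity inequality, applied along the subsequence) for the lower bound $\mathbb{B}_0(v^0)\le\liminf_n\lambda_n^{-p_0}\int_B Q_B(x,\lambda_n|\nabla v^{\lambda_n}|)\,dx\le\liminf_n\mathbb{G}_0^{\lambda_n}(v^{\lambda_n})$, and test minimality against $w^0$ for the matching upper bound: since $|\nabla w^0|=0$ in $A$, $\mathbb{G}_0^\lambda(v^\lambda)\le\mathbb{G}_0^\lambda(w^0)=\lambda^{-p_0}\int_B Q_B(x,\lambda|\nabla w^0|)\,dx$, whose limit is $\mathbb{B}_0(w^0)$ by (A4) and dominated convergence. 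Chaining the two gives $\mathbb{B}_0(v^0)\le\mathbb{B}_0(w^0)$; as $v^0$ is admissible and $w^0$ is the unique minimizer of \eqref{H}, equality holds and $v^0=w^0$. Because the limit is independent of the sequence and subsequence, the full family converges, yielding (i) and (iii), while (ii) was obtained above.

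The delicate point I expect to be the main obstacle is the dominated-convergence step in the upper bound when $p_0<p$ strictly: the dominating function furnished by (A3.i) contains $|\nabla w^0|^p$, which is integrable only if $w^0\in W^{1,p}(B)$. I would handle this either via the higher integrability of the minimizer of \eqref{H} (as tacitly used already in Lemma \ref{lem_dis_fund_0}) or, more safely, by replacing $w^0$ with a recovery sequence $w_k\in W^{1,p}(\Omega)$ that is constant on the components of $A$, equal to $f$ on $\partial\Omega$, and satisfies $w_k\to w^0$ in $W^{1,p_0}(\Omega)$; then $\limsup_\lambda\mathbb{G}_0^\lambda(v^\lambda)\le\mathbb{B}_0(w_k)\to\mathbb{B}_0(w^0)$ after a diagonal argument, which dispenses with any a priori regularity of $w^0$.
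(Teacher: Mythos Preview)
Your proposal is correct and follows essentially the same route as the paper: energy bounds from testing against a $W^{1,p}$ competitor constant on $A$, extraction of weak limits, strong convergence of gradients on $A$ from the $O(\lambda^{p_0-q_0})$ estimate, the fundamental inequality (Proposition \ref{fund_ine_propt0}) for the lower bound, and identification via uniqueness of the minimizer of \eqref{H}. In particular, the ``delicate point'' you flag---that dominated convergence for the upper bound requires a test function in $W^{1,p}(B)$ rather than merely $W^{1,p_0}(B)$---is exactly the issue the paper addresses, and your proposed remedy (a recovery sequence $w_k\in W^{1,p}(\Omega)$, constant on the components of $A$, with $w_k\to w^0$ in $W^{1,p_0}$) coincides with the paper's construction of the auxiliary function $\omega$ via density of $C_c^\infty(B)$ in $W_0^{1,p_0}(B)$.
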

\begin{proof}
For the sake of simplicity, we will only treat the case when $A$ has one connected component. The general case can be treated with the same approach. 

 {
Let us consider a function in $W^{1,p}(\Omega)$ whose trace on $\partial\Omega$ is $f$ and is such that $f\equiv w^0$ in $A$. We again denote this function by $f$ and hence we have $w^0-f\in W^{1,p_0}_0(B)$.}

 {
A density argument \cite[Th. 11.35]{leoni17} ensures that there exists $\{v_n\}_{n\in\N}\subseteq C_c^\infty(B)$ such that
\[
v_n \to w_0-f\quad \text{in } W^{1,p_0}(\Omega), \text{ as } n\to\infty.
\]
Consequently, we have
\[
\lim_{n\to+\infty}
\int_\Omega |\nabla v_n - \nabla (w^0-f)|^{p_0}dx=0.
\]
We immediately deduce that $f+v_n\in W^{1,p}(B)$ and
\[
\lim_{n\to+\infty}
\int_\Omega \beta_0(x)|\nabla (v_n+f) - \nabla w^0|^{p_0}dx=0.
\]
Hence, this implies that
\[
\lim_{n\to+\infty}
\mathbb B_0(v_n+f) =\mathbb B_0( w^0).
\]
}
 {
Therefore, for any $\varepsilon>0$, there exists $\omega\in W^{1,p}(\Omega)$ with $Tr(\omega)=f$ on $\partial\Omega$ and with constant value on $A$ such that:
\begin{equation}
\label{lavrentiev}    
\mathbb B_0(\omega)<\mathbb B_0(w^0)+\varepsilon.
\end{equation}
} 
Hence,
\begin{equation}
\label{chainGtvtv0Omega}
\begin{split}
\frac{ \underline{Q}}{E_0^{p_0}} \int_{B}|\nabla v^\lambda(x)|^{p_0}dx& \leq\frac 1 {\lambda^{p_0}}\int_{B} Q_B(x,\lambda|\nabla v^\lambda(x)|)dx\\
&\leq\mathbb G_0^\lambda (v^\lambda)\le \mathbb G_0^\lambda ( {\omega})=\frac 1 {\lambda^{p_0}}\int_{B} Q_B(x,|\lambda\nabla  {\omega}(x)|)dx\\
&\le\max\left\{ \frac{ \overline{Q}}{E_0^{p_0}} \int_{B}|\nabla  {\omega}(x)|^{p_0}dx,\frac{ \overline{Q}}{E_0^{p}}\lambda^{p-p_0} \int_{B}|\nabla  {\omega}(x)|^{p}dx\right\},
    \end{split}
\end{equation}
where in the first inequality we used (A3.i)-left, in the second inequality we exploited the fact that $\mathbb G^\lambda_0$ also contains the integral term over $A$, in the third inequality we used the fact that $v^\lambda$ is the minimizer of $\mathbb G_0^\lambda$, in the last inequality we used (A3.i)-right.

Since $\lambda^{p-p_0}$ is bounded, as $\lambda\to0^+$, we find that $\int_{B}|\nabla v^\lambda(x)|^{p_0}dx$ is definitively upper bounded by \eqref{chainGtvtv0Omega}. 

 {We first prove that $\{v^\lambda\}_\lambda\subseteq L^{p_0}(B)$ is equibounded. 
By using the Poincaré inequality \cite[Th. 13.19]{leoni17}, we have:
\begin{equation}
\label{bounded_poincare_vl}
\begin{split}
||v^\lambda||_{L^{p_0}(B)}&\leq  ||v^\lambda- f||_{L^{p_0}(B)}+|| f ||_{L^{p_0}(B)}\leq C||\nabla v^\lambda-\nabla f||_{L^{p_0}}+||f||_{L^{p_0}(B)}\\
&\leq C||\nabla v^\lambda||_{L^{p_0}(B)}+C||\nabla f||_{L^{p_0}(B)}+||f||_{L^{p_0}(B)}.
\end{split}
\end{equation}
The claim is proved because the right hand side is equibounded.}

 {Taking into account \eqref{chainGtvtv0Omega} and \eqref{bounded_poincare_vl},  
it follows that there exists $v^0\in W^{1,{p_0}}(B)$} such that, up to a subsequence, $v^\lambda\rightharpoonup v^0$ in $W^{1,{p_0}}(B)$, as $\lambda\to 0^+$, that is {\it (i)}.

Similarly to \eqref{chainGtvtv0Omega}, for region $A$, we have
\begin{equation}
\label{chainGtvtv0A}
\begin{split}
\frac{ \underline{Q}}{E_0^{q_0}}\int_{A}|\nabla v^\lambda(x)|^{q_0}dx & \leq\frac 1 {\lambda^{q_0}}\int_{A} Q_A(x,\lambda|\nabla v^\lambda(x)|)dx=\frac 1 {\lambda^{q_0-p_0}}\frac 1 {\lambda^{p_0}}\int_{A} Q_A(x,\lambda|\nabla v^\lambda(x)|)dx\\
&\leq\frac{1}{\lambda^{q_0-p_0}}\mathbb G_0^\lambda (v^\lambda)\le \frac{1}{\lambda^{q_0-p_0}}\mathbb G_0^\lambda ( {\omega})=\frac{1}{\lambda^{q_0-p_0}}\frac 1 {\lambda^{p_0}}\int_{B} Q_B(x,|\lambda\nabla  {\omega}(x)|)dx\\
&\le\frac{1}{\lambda^{q_0-p_0}}\max\left\{ \frac{ \overline{Q}}{E_0^{p_0}} \int_{B}|\nabla  {\omega} (x)|^{p_0}dx,\frac{ \overline{Q}}{E_0^p}\lambda^{p-p_0} \int_{B}|\nabla  {\omega}(x)|^{p}dx\right\},   
\end{split}
\end{equation}
where we exploited (A3.ii)-left in the first inequality. Therefore, by passing \eqref{chainGtvtv0A} to the limit, we have\[\lim_{\lambda\to 0^+}\lambda^{q_0-p_0}\int_{A}|\nabla v^\lambda(x)|^{q_0}dx\leq\frac{\overline{Q}}{\underline Q} E_0^{q_0-p_0} \int_{B}|\nabla  {\omega}(x)|^{p_0}dx.\]
Therefore, we find that
$\int_{A}|\nabla v^\lambda(x)|^{q_0}dx=O(\lambda^{p_0-q_0})$; 
 {now, we prove that $\{v^\lambda\}_\lambda\subseteq L^{q_0}(A)$ is equibounded.
By using the Poincaré inequality \cite[Th. 13.19]{leoni17}, we have:
\begin{equation}
\label{bounded_poincare_vlq}
\begin{split}
||v^\lambda||_{L^{q_0}(A)}\leq ||v^\lambda||_{L^{q_0}(\Omega)}& \leq  ||v^\lambda- f||_{L^{q_0}(\Omega)}+|| f ||_{L^{q_0}(\Omega)}\\
&\leq C||\nabla v^\lambda-\nabla f||_{L^{q_0}(\Omega)}+||f||_{L^{q_0}(\Omega)}\\
&\leq C||\nabla v^\lambda||_{L^{q_0}(\Omega)}+C||\nabla f||_{L^{q_0}(\Omega)}+||f||_{L^{q_0}(\Omega)}.
\end{split}
\end{equation}
The claim is proved because the right hand side is equibounded. H}ence, $||v^\lambda||_{W^{1,q_0}(A)}$ is definitively upper bounded.

Moreover, by taking into account \eqref{chainGtvtv0Omega}, \eqref{chainGtvtv0A}, \eqref{bounded_poincare_vlq}, that $q_0<p_0\le p$, 
it turns out that $v^\lambda\in W^{1,q_0}(\Omega)
$ and that $||v^\lambda||_{W^{1,q_0}(\Omega)}$ is upper bounded.
Therefore, up to a subsequence, we find that $v^\lambda\rightharpoonup v^0$ in $W^{1,q_0}(\Omega)$. Moreover, $v^\lambda\to v^0$ in $W^{1,q_0}(A)$ and $v^0$ is constant in $A$ because $\nabla v^\lambda \to 0$ in $L^{q_0}( {A})$. We have thus proved convergences {\it (ii)} and {\it (iii)}. 

The final step is to prove that $v^\lambda$ converges to $ {w^0}$, which is the minimizer for \eqref{H}. Specifically, 
we have the following inequalities:
\begin{equation}\label{chain_small_i}
    \begin{split}
\mathbb B_0( {w^0})\le\mathbb B_0(v^0)&\le\liminf_{\lambda\to 0^+}\frac 1 {\lambda^{p_0}}\int_{B} Q_B(x,|\lambda\nabla v^\lambda(x)|)dx\le \liminf_{\lambda\to 0^+}\mathbb G_0^\lambda(v^\lambda)\\
&\le 
\lim_{\lambda\to 0^+}\mathbb G_0^\lambda( {\omega})= \mathbb B_0(\omega)<\mathbb B_0(w^0)+\varepsilon,
\end{split}
\end{equation}
where in the first inequality we exploited that $ {w^0}$ is the minimizer of $\mathbb B_0$, in the second inequality we used the fundamental inequality of Proposition \ref{fund_ine_propt0}, in the third inequality we added the integral term on region $A$, in the fourth inequality 
we exploited that $v^\lambda$ is the minimizer of $\mathbb G_0^\lambda$,  {in the equality in the second line we have taken into account assumption (A4) and the dominated convergence Theorem, and in the last inequality we have used \eqref{lavrentiev}}.

 {By the arbitrariness of $\varepsilon>0$, t}his implies that $\mathbb B_0( {w^0})=\mathbb B_0(v^0)$ and, hence, $v^0= {w^0}$ due to uniqueness of the solution of minimization problem \eqref{H}.
\end{proof}

\begin{rem}
\eqref{chain_small_i} implies the equality in the fundamental inequality \eqref{fundamental_inequality3_0}.
\end{rem}

\subsection{Second case: $\mathbf{p_0<q_0}$}
\label{Small_smaller}

 {In the whole section we assume $p_0< q_0\le q$ and $p_0\leq p$; hence we have the continuous embeddings $W^{1,q}(\cdot)\hookrightarrow W^{1,q_0}(\cdot)\hookrightarrow W^{1,p_0}(\cdot)$ and $W^{1,p}(\cdot)\hookrightarrow W^{1,p_0}(\cdot)$ on any bounded set with Lipschitz boundary.}

In this case, problem \eqref{G_norm} particularizes as 
\begin{equation}
\label{G^tii}
\min_{\substack{v\in W^{1,p_0}(\Omega)\\ v=f\ \text{on}\ \partial \Omega}}\mathbb G_0^\lambda(v),\quad \mathbb G_0^\lambda(v)=\frac 1 {\lambda^{p_0}}\left(\int_{B} Q_B(x,\lambda|\nabla v(x)|)dx+\int_{A} Q_A(x,\lambda|\nabla v(x)|)dx\right),
\end{equation}
for any prescribed $f\in X_\diamond^{p}(\partial \Omega)$.

In this case, limiting problem \eqref{Hii} plays a key role. Specifically, we have the following Theorem.

\begin{thm}\label{Thm_conv_limii}
Let $1<p_0<q_0<+\infty$ with $p_0\leq p$ and $q_0\leq q$, $f\in X^{p}_\diamond(\partial \Omega)$ and $v^\lambda$ be the solution of \eqref{G^tii}. If (A1), (A2), (A3) and (A4) hold, then
\begin{itemize}
\item[] {$v^\lambda\rightharpoonup v_B^0$ in $W^{1,p_0}(B)$, as $\lambda\to 0^+$},
\end{itemize}
where $v_B^0\in W^{1,p_0}(B)$
is the unique solution of
\eqref{Hii}.
\end{thm}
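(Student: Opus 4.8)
The plan is to mirror the argument used for Theorem \ref{Thm_conv_lim}, replacing $w^0$ by $v_B^0$ and exploiting that now $q_0>p_0$, so that the inner region $A$ behaves as a perfect electric insulator and its contribution to $\mathbb G_0^\lambda$ disappears in the limit. Concretely, I would (i) build an admissible competitor $\omega$ approximating $v_B^0$; (ii) use it to derive an a priori $W^{1,p_0}(B)$-bound on $v^\lambda$ and extract a weak limit $v^0$; (iii) identify $v^0=v_B^0$ through the fundamental inequality of Proposition \ref{fund_ine_propt0} together with the uniqueness of the minimizer of \eqref{Hii}; and (iv) upgrade subsequential to full convergence by uniqueness of the limit.

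First I would fix, for every $\varepsilon>0$, a competitor $\omega\in W^{1,p_0}(\Omega)$ with $Tr(\omega)=f$ on $\partial\Omega$, regular enough in $A$ (say $\omega\in W^{1,q}(A)$, e.g. Lipschitz on $\overline A$), and with $\mathbb B_0(\omega)<\mathbb B_0(v_B^0)+\varepsilon$; this is obtained by a density/Lavrentiev-type argument as in Theorem \ref{Thm_conv_lim}, approximating $v_B^0$ in $W^{1,p_0}(B)$ while preserving the trace $f$ on $\partial\Omega$. Testing the minimality of $v^\lambda$ against $\omega$ and using (A3.i) exactly as in \eqref{chainGtvtv0Omega} gives $\frac{\underline Q}{E_0^{p_0}}\int_B|\nabla v^\lambda|^{p_0}dx\le \mathbb G_0^\lambda(\omega)$. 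Splitting $\mathbb G_0^\lambda(\omega)$ into its $B$- and $A$-parts and using (A3), the $B$-part stays bounded (since $p-p_0\ge 0$) while the $A$-part is $O(\lambda^{q_0-p_0})+O(\lambda^{q-p_0})\to 0$ because $q\ge q_0>p_0$; hence $\int_B|\nabla v^\lambda|^{p_0}dx$ is bounded. A Poincaré inequality, as in \eqref{bounded_poincare_vl}, then bounds $||v^\lambda||_{W^{1,p_0}(B)}$, so along a subsequence $v^\lambda\rightharpoonup v^0$ in $W^{1,p_0}(B)$; since the trace is weakly continuous, $v^0=f$ on $\partial\Omega$ and $v^0$ is admissible for \eqref{Hii}.

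It remains to close the chain of inequalities. By minimality of $v_B^0$, the fundamental inequality (Proposition \ref{fund_ine_propt0}), the nonnegativity of the $A$-term in $\mathbb G_0^\lambda$, the minimality of $v^\lambda$, and finally (A4) together with dominated convergence (which yields $\lim_{\lambda\to0^+}\mathbb G_0^\lambda(\omega)=\mathbb B_0(\omega)$, the $A$-part vanishing as above), I would obtain
\begin{align*}
\mathbb B_0(v_B^0)\le \mathbb B_0(v^0)&\le \liminf_{\lambda\to0^+}\frac{1}{\lambda^{p_0}}\int_B Q_B(x,\lambda|\nabla v^\lambda|)dx\le \liminf_{\lambda\to0^+}\mathbb G_0^\lambda(v^\lambda)\\
&\le \lim_{\lambda\to0^+}\mathbb G_0^\lambda(\omega)=\mathbb B_0(\omega)<\mathbb B_0(v_B^0)+\varepsilon.
\end{align*}
By the arbitrariness of $\varepsilon$ this forces $\mathbb B_0(v^0)=\mathbb B_0(v_B^0)$, hence $v^0=v_B^0$ by uniqueness of the minimizer of \eqref{Hii}. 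Since every weakly convergent subsequence has the same limit $v_B^0$, the whole family converges, proving $v^\lambda\rightharpoonup v_B^0$ in $W^{1,p_0}(B)$.

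The main obstacle is the first step: producing a competitor that simultaneously approximates $v_B^0$ in the $B$-energy with the prescribed trace on $\partial\Omega$ and is integrable enough in $A$ (at least $W^{1,q_0}(A)$) for the insulating contribution to vanish. Unlike Theorem \ref{Thm_conv_lim}, where $\omega$ could be taken constant in $A$, here $v_B^0$ is free on $\partial A$, so the density argument must preserve the $\partial\Omega$-trace while allowing arbitrary but regular behaviour near and inside $A$; without this extra $A$-regularity the factor $\lambda^{q_0-p_0}$ would multiply a possibly infinite integral and the estimate on the $A$-part would break down.
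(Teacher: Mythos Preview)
Your overall strategy is correct and matches the paper's proof: construct a competitor close to $v_B^0$ in the $\mathbb B_0$-energy, derive the $W^{1,p_0}(B)$-bound on $v^\lambda$, extract a weak limit $v^0$, and close via Proposition \ref{fund_ine_propt0} together with uniqueness for \eqref{Hii}. You also correctly isolate the only nontrivial point, namely that the competitor must be sufficiently regular in $A$ so that the $A$-contribution $\lambda^{-p_0}\int_A Q_A(x,\lambda|\nabla\omega|)\,dx$ is finite and tends to zero as $\lambda\to 0^+$.

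The paper resolves this obstacle in a slightly different manner from what you sketch. It first produces $\omega\in W^{1,p}(\Omega)$ by the same density argument as in Theorem \ref{Thm_conv_lim} (without any control in $A$), and then introduces an auxiliary test function $z$: equal to $\omega$ outside a $\delta$-neighbourhood $A_\delta$ of $A$, equal to a mollification $(\omega)_\tau$ inside $A$, and a convex interpolation in the transition layer $A_\delta\setminus A$. Since $(\omega)_\tau$ is smooth on $\overline A$, the $A$-part of $\mathbb G_0^\lambda(z)$ is $O(\lambda^{q_0-p_0})\to 0$; the transition layer contributes an extra term $I_{\delta,\tau}$ that vanishes after sending $\tau\to 0^+$ and then $\delta\to 0^+$. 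Your alternative route---building the $A$-regularity directly into $\omega$---can also be made to work (for instance by choosing the $W^{1,p}$ extension of the boundary datum to be smooth in a neighbourhood of $\overline A$, which is possible because $A\subset\subset\Omega$, so that $\omega=f+\varphi$ with $\varphi\in C_c^\infty(\Omega)$ is automatically smooth in $A$); however, the bare reference to ``a density/Lavrentiev-type argument as in Theorem \ref{Thm_conv_lim}'' is not quite sufficient, since that argument relied on $w^0$ being constant in $A$, a feature unavailable here. One minor remark: for the a priori bound via (A3.i) you need $\omega\in W^{1,p}(B)$, not merely $W^{1,p_0}(B)$, since the upper bound in (A3.i) involves $\int_B|\nabla\omega|^p\,dx$.
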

\begin{proof}
 {
Let us consider a Sobolev extension $\tilde v_B^0$ in $W^{1,p_0}(\Omega)$ of $v_B^0 \in W^{1,p_0}(B)$ and a function in $W^{1,p}(\Omega)$ whose trace on $\partial\Omega$ is $f$, that we again denote by $f$. Hence we observe that $\tilde v_B^0-f\in W^{1,p_0}_0(\Omega)$.}

 {A density argument \cite[Th. 11.35]{leoni17} ensures that there exists $\{v_n\}_{n\in\N}\subseteq C_c^\infty(\Omega)$ such that
\[
v_n \to \tilde v_B^0-f\quad \text{in } W^{1,p_0}(\Omega), \text{ as } n\to\infty.
\]
Consequently, we have
\[
\lim_{n\to+\infty}
\int_\Omega |\nabla v_n - \nabla (\tilde v_B^0-f)|^{p_0}dx=0.
\]
Therefore, we immediately deduce that $f+v_n\in W^{1,p}(\Omega)$ and
\[
\lim_{n\to+\infty}
\int_B \beta_0(x)|\nabla (v_n+f) - \nabla v_B^0|^{p_0}dx=0.
\]
Hence, this implies that
\[
\lim_{n\to+\infty}
\mathbb B_0(v_n+f) =\mathbb B_0( v_B^0).
\]
}
 {
Therefore, for any $\varepsilon>0$, there exists $\omega\in W^{1,p}(\Omega)$ with $Tr(\omega)=f$ on $\partial\Omega$ such that:
\begin{equation}
\label{lavrentiev2}    
\mathbb B_0(\omega)<\mathbb B_0(v_B^0)+\varepsilon.
\end{equation}
Hence, we have
\begin{equation}
\label{chainGtvtv0Omega2}
\begin{split}
\frac{ \underline{Q}}{E_0^{p_0}} \int_{B}|\nabla v^\lambda(x)|^{p_0}dx& \leq\frac 1 {\lambda^{p_0}}\int_{B} Q_B(x,\lambda|\nabla v^\lambda(x)|)dx\\
&\leq\mathbb G_0^\lambda (v^\lambda)\le \mathbb G_0^\lambda ( {\omega})=\frac 1 {\lambda^{p_0}}\int_{B} Q_B(x,|\lambda\nabla  {\omega}(x)|)dx\\
&\le\max\left\{ \frac{ \overline{Q}}{E_0^{p_0}} \int_{B}|\nabla  {\omega}(x)|^{p_0}dx,\frac{ \overline{Q}}{E_0^{p}}\lambda^{p-p_0} \int_{B}|\nabla  {\omega}(x)|^{p}dx\right\},
    \end{split}
\end{equation}
where in the first inequality we used (A3.i)-left, in the second inequality we exploited the fact that $\mathbb G^\lambda_0$ also contains the integral term over $A$, in the third inequality we used the fact that $v^\lambda$ is the minimizer of $\mathbb G_0^\lambda$ and in the last inequality we used (A3.i)-right.}


Since $||\nabla v^\lambda||_{L^{p_0}(B)}^{p_0}$ is definitively upper bounded and $v^\lambda=f$ on $\partial\Omega$, then  {by the Rellich–Kondrachov’s compactness Theorem \cite[Th. 12.18]{leoni17},} there exists a function $v^0\in W^{1,p_0}(B)$ such that 
\begin{equation}
    \label{conv_vt_v_0}
v^\lambda \rightharpoonup v^0\quad\text{in}\ W^{1,p_0}(B)\quad\text{as}\ \lambda\to 0^+.
\end{equation}

The final step is to prove that $v^0$ 
is equal to $v_B^0$, the solution of the limiting problem \eqref{Hii}. 

Let $\delta>0$ be prescribed, and let $A_\delta$ be the $\delta$-Minkowski neighbourhood of $A$: 
\[
A_\delta=\{x\in\Omega\ : \ \dist(x,A)<\delta\}.
\]
For any $\tau$ such that $0<\tau<\delta$, we denote the mollified function $({  {\omega}})_\tau=\rho_\tau * { {\omega}}$, where $\rho_\tau$ is the canonical mollifier. Then,  
for any $0<\tau<\delta$, we
define
\[
 {z}(x)=
\begin{cases}
 {\omega} &\text{in}\ B\setminus A_{\delta},\\
\frac{\dist(x,A_\delta)}\delta  {\omega}+\left(1-\frac{\dist(x,A_\delta)}\delta \right)( { \omega})_\tau & \text{in}\ A_{\delta}\setminus A,\\
( { \omega})_\tau &\text{in}\ A.
\end{cases}
\]

 {Let us observe that the mollification of $\omega$ in $A$ (in the definition of $z$) is well-defined, because $A\subset\subset \Omega$.}

We have the following inequalities:
\begin{equation}
\label{chainii}
\begin{split}
\mathbb B_0(v_B^0)&\le\mathbb B_0(v^0)\le \liminf_{\lambda\to 0^+}\mathbb G_0^\lambda(v^\lambda) 
\le\lim_{\lambda\to 0^+}\mathbb G_0^\lambda( {z})\\
&\leq\int_{B\setminus A_{\delta}}\beta_0(x)|\nabla  {\omega}(x)|^{p_0}dx\\
&\qquad+\frac{ \overline{Q}}{E_0^p} \left(\int_{A_{\delta}\setminus A} |\nabla  {\omega}(x)|^{p_0}+|\nabla (  {\omega})_\tau(x)|^{p_0}+\frac{|  {\omega}(x)-( {\omega})_\tau(x)|^{p_0}}{\delta^{p_0}}dx\right)\\
&\qquad+\frac{ \overline{Q}}{E_0^{q_0}}\lim_{\lambda\to 0^+} \lambda^{q_0-p_0}\int_A|\nabla ( {\omega})_\tau(x)|^{q_0}dx\\
&\leq \mathbb B_0( {\omega})+I_{\delta,\tau} {<\mathbb B_0(v_B^0)+I_{\delta,\tau}+\varepsilon},
\end{split}
\end{equation}
where in the first inequality we exploited the fact that $v^0_B$ is the minimizer of $\mathbb B_0$, in the second inequality we used the fundamental inequality stated in Proposition \ref{fund_ine_propt0} (since assumption (A4) holds), in the third inequality 
we used the fact that $v^\lambda$ is the minimizer of $\mathbb G^\lambda_0$,
in the fourth inequality we used the dominate convergence Theorem thanks to assumption (A4), in the fifth equality we exploited the fact that $\lim_{\lambda \to 0^+} \lambda^{q_0-p_0}=0$ for $q_0>p_0$,  {and in the sixth inequality we used \eqref{lavrentiev2}}. 
The symbol $I_{\delta,\tau}$ refers to the terms in round brackets.

Since \eqref{chainii} holds for any $0<\tau<\delta$, by first letting $\tau\to 0^+$ and then $\delta\to 0^+$, we have
\[
\lim_{\delta\to 0^+}\lim_{\tau\to 0^+}I_{\delta,\tau}=2\lim_{\delta\to 0^+}\int_{A_{\delta}\setminus A} |\nabla {\omega}(x)|^{p_0}dx= 0,
\] 
where in the first equality we exploited the uniform convergence of the Sobolev extension \cite[Prop. IV.21]{brezis1986analisi} and in the second equality we exploited the fact that the measure of $A_\delta\setminus A$ is negligible for $\delta \to 0^+$.

Hence,  {by the arbitrariness of $\varepsilon>0$,} the inequality \eqref{chainii} implies that $\mathbb B_0(v_B^0)= \mathbb B_0(v^0)$ and, therefore, $v_B^0=v^0$ thanks to the uniqueness of \eqref{Hii}. This result together with \eqref{conv_vt_v_0} yields the conclusion.
\end{proof}

\begin{rem}
We observe that \eqref{chainii}, implies the equality in the fundamental inequality \eqref{fundamental_inequality3_0}.

Let us observe that from \eqref{chainii}, we find that the fundamental inequality (i.e. the second inequality in \eqref{chainii}), also stated in Proposition \ref{fund_ine_propt0}, holds as equality:\[\mathbb B_0(v^0)= \liminf_{\lambda\to 0^+}\mathbb G_0^\lambda(v^\lambda).\]
\end{rem}

\section{The pointwise convergence assumption in the limiting cases}
\label{counter_sec}
The main aim of this Section is to prove that assumption (A4) for small Dirichlet data is sharp. Specifically, we provide one example where (A4) does not hold, and the previous convergence results (Theorems  \ref{Thm_conv_lim} and \ref{Thm_conv_limii}) do not hold.

 {With a similar approach, not reported here for the sake of brevity, it is possible to prove that even assumption (A4') is sharp.}

We prove this result by providing a Dirichlet energy density for which the ratio $Q_B(x,E)/E^{p_0}$ does not admit the limit for $E\to 0^+$. As before, we first need to provide two suitable subsequences (see Figure \ref{fig_9_counter} for the geometric interpretation).
\begin{figure}[ht]
    \centering
    \includegraphics[width=0.475\textwidth]{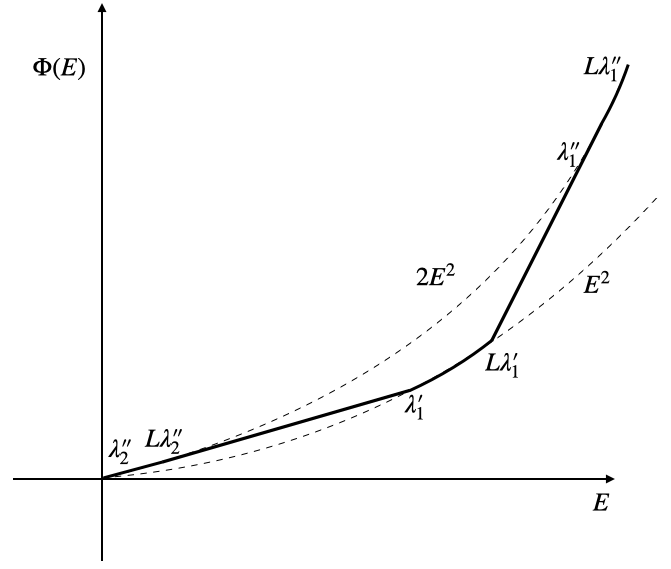}
    \includegraphics[width=0.475\textwidth]{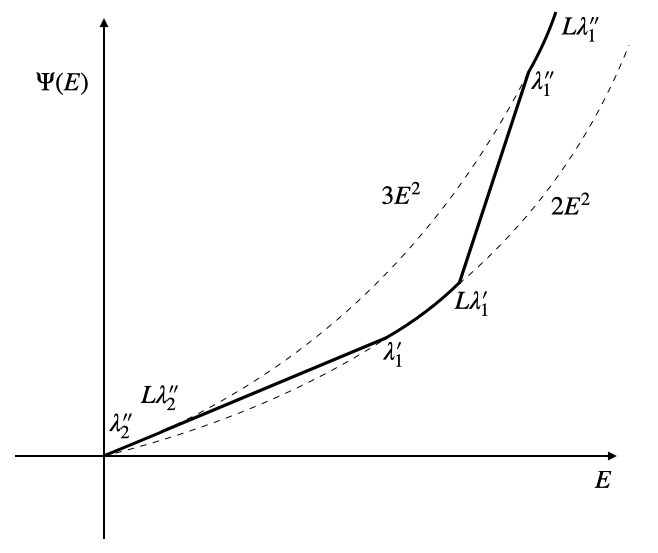}
    \caption{The continuous line represents the function describing the Dirichlet energy density used in the counterexample for small Dirichlet data.}
    \label{fig_9_counter}
\end{figure}

\begin{lem}\label{succ_L0}
Let $L>1$, then there exist two sequences
\[
\{\lambda_n'\}_{n\in\N}\downarrow 0^+ \quad\text{and}\quad \{\lambda_n''\}_{n\in\N}\downarrow 0^+
\]
such that
\[
L \lambda_{n+1}''<\lambda_n',\ 
L \lambda_n'< \lambda_n''\ \ 
\forall n\in\N,
\]
and a strictly convex function
\[
\Psi:[0,+\infty[\to[0,+\infty[
\]
 such that
\[
\Psi|_{[\lambda_n',L \lambda_n']}(E)=2E^2\quad \Psi|_{[\lambda_n'',L \lambda_n'']}(E)=3E^2.
\]
\end{lem}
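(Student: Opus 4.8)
The plan is to build $\Psi$ through its derivative. I set $\psi:=\Psi'$ and prescribe $\psi$ to be a continuous, strictly increasing function on $[0,+\infty)$ with $\psi(0)=0$, equal to $4E$ on each interval $[\lambda_n',L\lambda_n']$ (since $\frac{d}{dE}(2E^2)=4E$) and equal to $6E$ on each interval $[\lambda_n'',L\lambda_n'']$ (since $\frac{d}{dE}(3E^2)=6E$), interpolated strictly monotonically on the complementary gaps. Then $\Psi(E):=\int_0^E\psi(s)\,ds$ is a $C^1$ strictly convex function, strict convexity being equivalent to strict monotonicity of $\psi$. The only genuine constraint is that the integration constants match, i.e. that $\Psi$ equals $2E^2$ and $3E^2$ \emph{exactly} (not merely up to an additive constant) on the prescribed intervals; this forces $\Psi(\lambda_n')=2(\lambda_n')^2$ and $\Psi(\lambda_n'')=3(\lambda_n'')^2$, which in turn prescribes the integral of $\psi$ over each gap to equal the corresponding difference of endpoint values.

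I will choose the two sequences geometrically, with enough room to accommodate this budget. Explicitly, take $\lambda_n':=(6L^2)^{-n}$ and $\lambda_n'':=2L\,(6L^2)^{-n}$, both decreasing to $0^+$. A direct check gives $L\lambda_n'=L(6L^2)^{-n}<2L(6L^2)^{-n}=\lambda_n''$ and $L\lambda_{n+1}''=\tfrac13(6L^2)^{-n}<(6L^2)^{-n}=\lambda_n'$, so the required ordering $L\lambda_{n+1}''<\lambda_n'$ and $L\lambda_n'<\lambda_n''$ holds, and the breakpoints are arranged, from small to large, as $\lambda_{n+1}'<L\lambda_{n+1}'<\lambda_{n+1}''<L\lambda_{n+1}''<\lambda_n'<\cdots$.

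It then remains to define $\psi$ on the two families of gaps $[L\lambda_n',\lambda_n'']$ and $[L\lambda_{n+1}'',\lambda_n']$. On a gap $[a,b]$ I need a continuous strictly increasing $\psi$ matching the endpoint slopes $d_1=\psi(a)$ and $d_2=\psi(b)$ inherited from the adjacent intervals, with prescribed average $s:=\frac{\Psi(b)-\Psi(a)}{b-a}$ equal to the secant slope through the prescribed graph endpoints. Such a $\psi$ exists precisely when $d_1<s<d_2$, and it is realized by a one-parameter family such as $\psi(E)=d_1+(d_2-d_1)\big(\tfrac{E-a}{b-a}\big)^t$, whose average $d_1+\frac{d_2-d_1}{t+1}$ sweeps the whole interval $(d_1,d_2)$ as $t$ ranges over $(0,+\infty)$. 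With the explicit sequences above one computes, on $[L\lambda_n',\lambda_n'']$, $d_1=4L\lambda_n'$, $d_2=6\lambda_n''=12L\lambda_n'$ and $s=10L\lambda_n'$, so $d_1<s<d_2$; and on $[L\lambda_{n+1}'',\lambda_n']$, $d_1=6L\lambda_{n+1}''=2\lambda_n'$, $d_2=4\lambda_n'$ and $s=\tfrac52\lambda_n'$, so again $d_1<s<d_2$. Because consecutive endpoint slopes increase across every gap ($4L\lambda_n'<6\lambda_n''$ and $6L\lambda_{n+1}''<4\lambda_n'$), the resulting $\psi$ is globally continuous and strictly increasing, with $\psi(0^+)=0$.

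Finally, a telescoping argument closes the proof: since $\Psi(E)=\int_0^E\psi$ and the increment of $\Psi$ over each interval and each gap equals, by construction, the difference of the corresponding prescribed endpoint values, summing over all pieces contained in $(0,\lambda_n')$ — whose prescribed values at the lower end tend to $0$ — yields $\Psi(\lambda_n')=2(\lambda_n')^2$, and likewise $\Psi(\lambda_n'')=3(\lambda_n'')^2$. Hence $\Psi\equiv 2E^2$ on each $[\lambda_n',L\lambda_n']$ and $\Psi\equiv3E^2$ on each $[\lambda_n'',L\lambda_n'']$, while $\Psi$ is strictly convex since $\psi$ is strictly increasing. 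I expect the main obstacle to be exactly this matching of integration constants — equivalently the secant conditions $d_1<s<d_2$ on every gap — which is why the sequences must be taken to decrease fast enough; the explicit geometric choice is designed to make all these inequalities transparent.
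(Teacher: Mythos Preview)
Your proof is correct and takes a genuinely different route from the paper's. The paper first builds an auxiliary \emph{convex} (not strictly) function $\Phi$ equal to $E^2$ on the primed intervals and $2E^2$ on the double-primed intervals, filling the gaps with straight tangent lines; the sequences $\{\lambda_n'\}$, $\{\lambda_n''\}$ are then determined \emph{implicitly} by the tangency/intersection conditions, yielding the specific geometric ratio $(3+2\sqrt2)L^2$. Strict convexity is obtained in one stroke by setting $\Psi=\Phi+E^2$. Your approach instead fixes explicit geometric sequences \emph{a priori}, works at the level of the derivative $\psi=\Psi'$, and uses a one-parameter power interpolation on each gap to hit the required secant value; the key analytic content is the verification of the inequalities $d_1<s<d_2$ on both gap families, which you carry out cleanly.

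The paper's method is more geometric and self-tuning (the tangent-line construction automatically matches values and guarantees convexity), while yours is more computational but has the advantage of giving completely explicit sequences and of isolating precisely which condition ($d_1<s<d_2$) drives the construction. One small omission: you do not say what $\psi$ is on $[L\lambda_1'',+\infty)$; extending $\psi(E)=6E$ there (so $\Psi=3E^2$ for $E\ge\lambda_1''$) completes the definition harmlessly.
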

\begin{proof}
Let us fix $\lambda_1''>0$.
For each $n \in \mathbb{N}$ we set the auxiliary function $\Phi$ equal to $2 E^2$ in $(\lambda_n'', L \lambda_n'')$ and equal to $E^2$ in $(\lambda_n', L \lambda_n')$. In interval $(L \lambda_n'',\lambda_n')$ the function $\Phi$ is equal to the tangent line to function $2E^2$ evaluated at $L \lambda_n''$. 
Point $\lambda_n'$ is found at the intersection of this tangent line with function $E^2$. In interval $(L \lambda_n',\lambda_{n+1}'')$ the function $\Phi$ is a straight line, continuous at $L \lambda_n'$ and tangent to $2E^2$.
Point $\lambda_{n+1}''$ is found as the abscissa of the tangent point between this straight line and function $2E^2$. This procedure is applied iteratively from $n=1$. 
Function $\Phi$ is convex and sequences $\{\lambda_n'\}_{n \in \mathbb{N}}$ and $\{\lambda_n''\}_{n \in \mathbb{N}}$ are monotonically decreasing to zero.

Therefore, the measure of intervals where $\Phi$ is equal to $E^2$ or equal to $2E^2$ is nonvanishing.
It is possible to prove that $\{\lambda'_n\}_{n\in\N}$ and $\{\lambda_n''\}_{n\in\N}$ are two  {geometric sequences}. Indeed
\[
\lambda_n'=\frac{1}{c_2L}\lambda_n'',\ \lambda_{n+1}''=\frac{1}{c_1L} \lambda_n',\ \lambda_{n+1}'=\frac 1{C^2L^2}\lambda_n',\ \lambda_{n+1}''=\frac 1{C^2L^2}\lambda_n'',
\]
where $c_1=2+\sqrt 2$, $c_2=1+\frac{\sqrt 2}{2}$ and $C$ is the  {geometric mean} of $c_1$ and $c_2$, that is $C^2
=\left(3+2\sqrt 2\right)$.

Finally, we set $\Psi(E)=\Phi(E)+E^2$. $\Psi$ is a strictly convex function.
\end{proof}

The construction of the counterexample in the planar case ($n=2$) for $p_0=2$ and $1<q_0<+\infty$ follows the steps of the previous Section line by line but with the aim of showing that $\lim_{\lambda\to 0^+}\mathbb G_0^\lambda(v^\lambda)\not\in\R$, where $\mathbb G_0^\lambda$ is defined in \eqref{G_norm}. Specifically, the two sequences $\{\lambda_n'\}_{n\in\N}\downarrow 0$ and $\{\lambda_n''\}_{n\in\N}\downarrow 0$ satisfy
\begin{equation*}
\limsup_{n\to +\infty}\mathbb G_0^{\lambda_n'}(v^{\lambda_n'})\le m_1(r)<m_2(r)\le\liminf_{n\to +\infty}\mathbb G_0^{\lambda_n''}(v^{\lambda_n''}).
\end{equation*}
The Dirichlet energy density defined as $Q_B(x,E)=\Psi(E)$, satisfies all the assumptions except (A4). This energy density is the basis to build a counterexample proving that (A4) is sharp. Specifically, we consider a 2D case ($n=2$) and $p_0=2$ in the outer region. The growth exponent $q_0$ satisfies condition $1<q_0<\infty$.

Let $r$ be greater than or equal to 10, and let the outer region $\Omega$ be the annulus centred in the origin with radii $1$ and $r$. This annulus is $D_r\setminus\overline D_1$,   {where $D_r$ and $D_1$ are the disks of radii $r$ and $1$, respectively,} and centered at the origin. The inner region is, therefore, $D_1$. We focus on problem \eqref{G_norm}, where the Dirichlet energy density is defined as
\[
\begin{split}
Q_B(x,E)&=\Psi(E)\quad\text{in}\ D_r\setminus\overline D_1\times [0,+\infty[,\qquad\\ Q_A(x,E)&=E^{q_0}\qquad\text{in}\ D_1\times [0,+\infty[.
\end{split}
\]
Let $\gamma$ be defined as $\gamma=7+\frac {12}{r^2}$. We denote $x=(x_1,x_2) \in \mathbb{R}^2$ and we consider the problem
\begin{equation}
\label{G^tc1large}
\min_{\substack{v\in W^{1,q}(D_r)\\ v=\gamma x_1\ \text{on}\ \partial D_r}}\mathbb G^\lambda(v),\quad \mathbb G^\lambda(v)=\frac 1 {\lambda^2}\left(\int_{ D_r\setminus D_1} \Psi(\lambda|\nabla v(x)|)dx+\int_{D_1} \lambda^q|\nabla v(x)|^q dx\right).
\end{equation}

Here we prove that $\lim_{\lambda\to +\infty}\mathbb G^\lambda(v^\lambda)$ does not exist. Specifically, the two sequences $\{\lambda_n'\}_{n\in\N}\uparrow +\infty$ and $\{\lambda_n''\}_{n\in\N}\uparrow+\infty$ of Lemma \ref{succ_L0} give 
\begin{equation}
   \label{counter_resultlarge}
\limsup_{n\to +\infty}\mathbb{G}^{\lambda_n'}(v^{\lambda_n'})\le\ell_1<\ell_2\le\liminf_{n\to +\infty}\mathbb{G}^{\lambda_n''}(v^{\lambda_n''}).
\end{equation}
As usual, $v^\lambda$ is the solution of \eqref{G^tc1large}.

Let us consider the following problem
\begin{align}
\label{problem_down}
&\min_{\substack{v\in H^{1}(D_r\setminus\overline D_1)\\ v=\gamma x_1\ \text{on}\ \partial D_r\\v=const.\ \text{on}\ \partial D_1}}\mathbb B_0 (v), \quad \mathbb B_0(v)=\int_{ D_r\setminus D_1} |\nabla v(x)|^2dx.
\end{align}
The symmetry of the domain and the zero average of the boundary data imply that the constant appearing in \eqref{problem_down} on $\partial D_1$ is zero.

An easy computation reveals that
\[
v_{D_r}(x)=\frac{7r^2+12}{r^2-1}\left(1-\frac{1}{x_1^2+x_2^2}\right)x_1\quad\text{in}\ D_r\setminus\overline D_1
\]
is the solution of 
\eqref{problem_down}, that $\Delta v_{D_r}=0$ in $D_r\setminus\overline D_1$, and that we have
\[
\begin{split}
\frac{7r^2-12}{r^2-1}\left(1-\frac{1}{\rho^2}\right)\le|\nabla v_{D_r}(x)|\le\frac{7r^2+12}{r^2-1} \left(1+\frac 1{\rho^2}\right)\quad\text{on}\ \partial D_{\rho},\ 1<\rho\le r.
\end{split}
\]
Consequently, when $\rho\ge2$, we have
\begin{equation}
    \label{stima_nablaw}
1\le\frac 34 \frac{7r^2-12}{r^2-1}\le|\nabla v_{D_r}(x)|\le\frac 54 \frac{7r^2+12}{r^2-1}\leq 10\quad\text{in}\ D_r\setminus D_2.
\end{equation}

Let $L$ be greater than 10, $\lambda_n'\uparrow +\infty$ and let $\lambda_n''\uparrow +\infty$ be the  two sequences of Lemma \ref{succ_L0}.  
It turns out that
\begin{equation}
\label{stime_up_down}
\lambda_n' \le
\lambda_n'|\nabla v_{D_r}(x)|
\le L \lambda'_n\quad\text{in}\ D_r\setminus D_2.
\end{equation}
We have
\begin{equation}
    \label{inf_counter}
\begin{split}
&\limsup_{n\to +\infty}\mathbb G^{\lambda'_n}(v^{\lambda_n'})\leq\limsup_{n\to +\infty} \mathbb G^{\lambda'_n}(v_{D_r})\\
&=\limsup_{n\to +\infty}\frac 1{(\lambda'_n)^2} \int_{D_r\setminus D_2}\Psi(\lambda'_n|\nabla v_{D_r}(x)|)dx+\limsup_{n\to +\infty}\frac 1{(\lambda'_n)^2} \int_{D_2\setminus D_1}\Psi(\lambda'_n|\nabla v_{D_r}(x)|)dx\\
&\leq 2 \int_{D_r\setminus D_2}|\nabla v_{D_r}(x)|^2dx+3 \int_{D_2\setminus D_1}|\nabla v_{D_r}(x)|^2dx,
\end{split}
\end{equation}
where in the first line we used the minimality of $v^{\lambda'_n}$ for $\mathbb G^{\lambda'_n}$ and that $v_{D_r}$ is an admissible function for problem \eqref{G^tc1large},
in the second line we exploited the property that the gradient of $v_{D_r}$ in $D_1$ is vanishing, and in the third line we used \eqref{stime_up_down}. By setting $\ell_1$ equal to \eqref{inf_counter}:
\[
\ell_1:=2 \int_{D_r\setminus D_2}|\nabla v_{D_r}(x)|^2dx+3 \int_{D_2\setminus D_1}|\nabla v_{D_r}(x)|^2dx,\]
we have the leftmost inequality in \eqref{counter_resultlarge}.

To obtain the rightmost inequality in \eqref{counter_resultlarge}, we consider the following problems
\begin{equation}
    \label{AuxF}
\min_{\substack{v\in H^{1}(D_r\setminus\overline D_1)\\ v=\gamma x_1\ \text{on}\ \partial D_r}} \mathbb H^\lambda (v),\quad\mathbb H^\lambda(v)= \frac{1}{\lambda^2}\int_{D_r\setminus D_2}\Psi(\lambda|\nabla v(x)|)dx+2 \int_{D_2\setminus D_1}|\nabla v(x)|^2dx.
\end{equation}
\begin{equation}
    \label{problem_up}
\min_{\substack{v\in H^{1}(D_r\setminus\overline D_1)\\ v=\gamma x_1\ \text{on}\ \partial D_r}}\mathbb D(v),\quad \mathbb D (v) =3\int_{D_r\setminus D_2}|\nabla v(x)|^2dx+2\int_{D_2\setminus D_1}|\nabla v(x)|^2dx.
\end{equation}
The unique solution of \eqref{problem_up} is
\[
w_{D_r}(x)=
\begin{cases}
\left(7+\frac{12}{x_1^2+x_2^2}\right)x_1&\quad\text{in}\ D_r\setminus\overline D_2\\
8\left(1+\frac{1}{x_1^2+x_2^2}\right)x_1 &\quad\text{in}\ D_2\setminus\overline D_1.
\end{cases}
\]
Analogously to \eqref{stima_nablaw}, it can be easily proved that
\[
1\le 4\le \left(7-\frac{12}{\rho^2}\right)\le|\nabla w_{D_r}(x)|\le \left(7+\frac {12}{\rho^2}\right)\le 10< L\quad\text{on}\ \partial D_{\rho},\ 2\le\rho\le r.
\]
and hence we choose $L>10$ such that
\begin{equation}
\label{stime_up}
\lambda_n'' \le
\lambda_n''|\nabla w_{D_r}(x)|
\le L \lambda''_n\quad\text{in}\ D_r\setminus D_2.
\end{equation}
Therefore, we have
\begin{equation}
\label{sup_counter}
\mathbb{G}^{\lambda_n''}(v^{\lambda_n''})\geq\mathbb H^{\lambda''_n}(w_{D_r})=\mathbb D(w_{D_r}),
\end{equation}
where the inequality comes from the definition of $\Psi$. The equality follows from the fact that $\mathbb H^{\lambda''_n}$ coincides with $\mathbb D$ by \eqref{stime_up} and the definition of $\Psi$. Note that $w_{D_r}$ is a local minimizer in $W^{1,\infty}(D_r)\cap W^{1,2}(D_r)$, since $\Psi$ does not depend on $x$  (see \cite{cianchi2010global} for details). Finally, $w_{D_r}$ is a global minimizer thanks to the uniqueness of \eqref{AuxF}.

By setting
\[\ell_2(r):=
\mathbb D(w_{D_r}).
\]
we have the rightmost inequality in \eqref{counter_resultlarge} by passing to the limit in \eqref{sup_counter}.

At this stage, it only remains to be proved that $\ell_1(r)<\ell_2(r)$. To this purpose, we notice that:
\[
\begin{split}
 \ell_1(r)&= 2 \int_{D_r\setminus D_2}|\nabla v_{D_r}(x)|^2dx+3 \int_{D_2\setminus D_1}|\nabla v_{D_r}(x)|^2dx\\
\ell_2(r)&= 3\int_{D_r\setminus D_2}|\nabla w_{D_r}(x)|^2dx+2\int_{D_2\setminus D_1}|\nabla w_{D_r}(x)|^2dx.
\end{split}
\]
Condition $\ell_1(r)<\ell_2(r)$ holds for large $r$, by observing that (i) $v_{D_r}$ and $w_{D_r}$ solve the same associated Euler-Lagrange equation on $D_2\setminus\overline D_1$, (ii) $\nabla v_{D_r}(x)$ and $\nabla w_{D_r}(x)$ are bounded functions on the bounded domain $D_2\setminus D_1$ by \eqref{stime_up_down} and \eqref{stime_up}, respectively, and (iii) it turn out that
\[
\begin{split}
&\lim_{r\to+\infty}\frac{
\int_{ D_r\setminus D_2} |\nabla v_{D_r}(x)|^2dx}{\int_{ D_r\setminus D_2} |\nabla w_{D_r}(x)|^2dx}= 1,\\
&\lim_{r\to+\infty}\int_{D_r\setminus D_2}|\nabla v_{D_r}(x)|^2dx=\lim_{r\to+\infty}\int_{D_r\setminus D_2}|\nabla w_{D_r}(x)|^2dx=+\infty.
\end{split}
\]

\section{Forward and Inverse Problems: Applications and Numerical analysis}
\label{num_sec}

\begin{figure}[htp]
    \centering
    \includegraphics[width=1.0\textwidth]{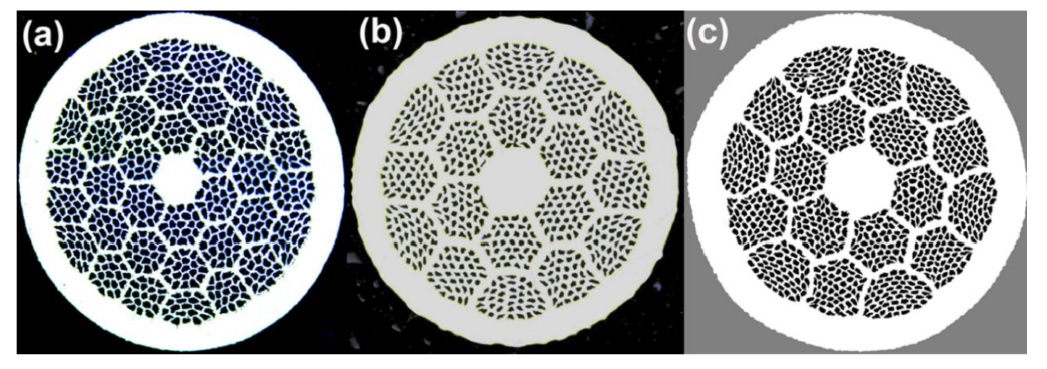}
    \caption{Picture of the cross section for typical superconducting cables. The cable consists in several petals (36 petals for (a), and 18 petals for (b) and (c)). Each petal is made up of many thin SC wires (19 wires for (a), 37 wires for (b) and 61 for (c)). The picture is in \cite[Fig. 4]{instruments4020017} and it is courtesy of Instruments-MPDI.}
    \label{fig_9_SC}
\end{figure}
In this Section we propose some applications of the theoretical results of the previous Sections. The case of study refers to superconducting wires: a major component in technological applications. After a brief presentation of superconducting materials, we show the impact of the theoretical results on both the \emph{Forward Problem}, i.e. finding the scalar potential $u$ assigned to the materials and the boundary data, and the \emph{Inverse Problem}, i.e retrieving the shape of defects in the cross section of the wire. Figure \ref{fig_9_SC} shows a typical cross section for a few superconducting cables.

A type II High Temperature Superconducting (HTS) material \cite{seidel2015applied,krabbes2006high}, in its superconductive state, is well described by a constitutive relationship given by 
\begin{equation}
    \label{E-J_power_law}
E(J)=E_0\left({J}/{J_c}\right)^n.
\end{equation}
This constitutive relationship, named \textit{E-J Power Law}, was proposed by Rhyner in \cite{rhyner1993magnetic} to properly reflect the nonlinear relationship between the electric field and the current density in HTS materials.

An HTS described by an ideal E-J Power Law behaves like a PEC for a \lq\lq small\rq\rq \ boundary potential, when \lq\lq immersed\rq\rq\ in a linear conductive material. Indeed, its electrical conductivity is given by
\begin{equation*}
  \sigma = \frac {J_c}{E_0} \left( \frac{E}{E_0} \right) ^\frac{1-n}n,
\end{equation*}
and it turns out that $\sigma\to+\infty$ as $E \to 0^+$.

Typical parameters for $J_c$ and $n$ are given in Table \ref{table_par}. They refer to commercial products from European Superconductors (EAS-EHTS) and  American Superconductors (AMSC) \cite{lamas2011electrical}. The value of $E_0$ is almost independent of the material and equal to $0.1$ mV/m \cite{yao2019numerical}.

\begin{table}[htp]
\centering
\hspace{2.4cm}\begin{tabular}{|l|l|l|}
\hline 
Type & $J_c$[A/mm$^2$] & $n$\\ \hline
BSCCO EAS  & 85 & 17  \\ 
BSCCO AMSC & 135 & 16 \\
YBCO AMSC & 136 & 28 \\
YBCO SP SF12100 & 290 & 30 \\
YBCO SP SCS12050 & 210 & 36 \\ \hline
\end{tabular}
\caption{Typical parameters for $J_c$ and $n$.}
\label{table_par}
\end{table}

The numerical examples have been developed with an in-house Finite Element Method (FEM) based on \cite{grilli2005finite}.
We consider a standard Bi-2212 round wire. The geometry of the cable is shown in Figure \ref{fig_10_mesh} (left), together with the finite element mesh used for the numerical computations (see Figure \ref{fig_10_mesh} (right)). 

\begin{figure}[htp]
    \centering
    \includegraphics[width=0.45\textwidth]{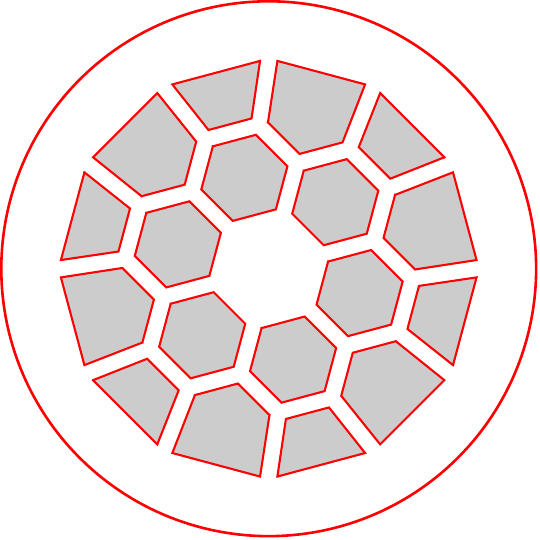}\qquad\quad \includegraphics[width=0.45\textwidth]{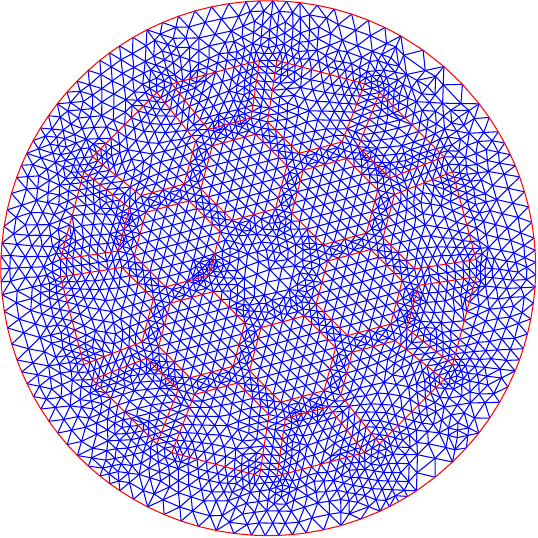}
    \caption{Left: geometry of the cross-section of the superconducting cable. The solid superconducting material (light grey) in the linear matrix (white). Right: the finite element mesh used in the numerical computations.}
    \label{fig_10_mesh}
\end{figure}

The radius $R_e$ of this HTS cable is equal to 0.6 mm \cite{huang2013bi}. The geometry of the problem is simplified w.r.t. those in Figure \ref{fig_9_SC}. Specifically, each \lq\lq petal\rq\rq\ is assumed to be made up of an individual (solid) superconducting wire, rather than many thin superconducting wires. The electrical conductivity of the matrix surrounding the petals is $5.55 \times 10^7$ S/m \cite{li2015rrr}. This electrical conductivity is equal to $95.8\%$ IACS. As reported in~\cite{Ba21}, the superconducting material is characterized by a very high value of critical current $J_c$. In particular, we assume
\begin{displaymath}
J_c=8000\,\text{A/mm$^2$}, \quad n=27.
\end{displaymath}

\subsection{Solution of the forward problem}\label{forward_problem}
The replacement of the original problem with its limiting \lq\lq version\rq\rq\ has a major impact when numerically solving the \emph{Forward Problem}, i.e. the computation of the scalar potential $u$ for prescribed materials and boundary data. Specifically, the solution of the original nonlinear problem is carried out by an iterative method. At each iteration the solution of a linear system of equations is required. This linear system of equations is characterized by a sparse matrix and, therefore, its solution is obtained by an iterative approach. However, the regions corresponding to the nonlinear materials may give rise to strongly ill-conditioned matrices, posing relevant challenges when solving the related linear system of equations. On the other hand, when it is possible to replace the original problem with its limiting version, the nonlinear material is replaced by a PEC and the overall problem is linear.

In the following we compare the solution of the \emph{Forward Problem} obtained by simulating the actual (nonlinear) superconducting material and the limiting problem where the superconducting material is replaced by the PEC. 

In all numerical calculations, the applied boundary potential is equal to $f(x,y)=V_0 x/R_e$. In this way the parameter $V_0$ represents the maximum value for the boundary potential.

Figure \ref{fig_11_err} shows the errors for \lq\lq small\rq\rq\ boundary potential when replacing the superconducting material with a PEC. The error metrics are equal to the relative error in the $L^2-$norm ($e_2$) and in the $L^\infty-$norm ($e_\infty$). As expected, the PEC approximation is valid for \lq\lq small\rq\rq\ $V_0$. The accuracy of the approximation is very high in its domain of validity.

\begin{figure}[htp]
    \centering
    \includegraphics[width=0.7\textwidth]{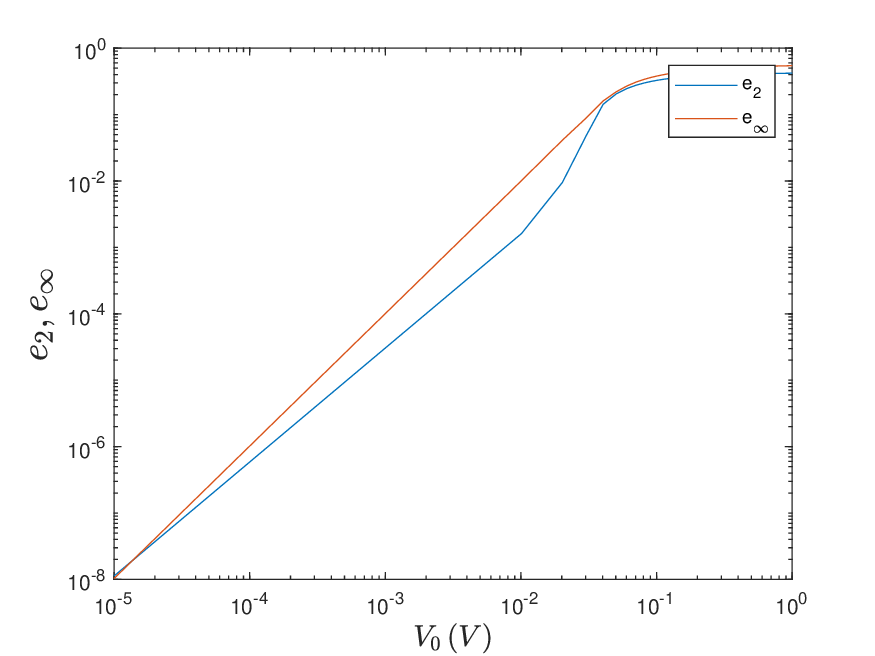}
    \caption{Plot of the error for the PEC approximation (\lq\lq small\rq\rq\ boundary potential).}
    \label{fig_11_err}
\end{figure}

Figure \ref{fig_12_E_CEP_J_IEP} shows the spatial distribution of the electric field $\mathbf{E}$ (top) and of the electric scalar potential $u$ (bottom) for small ($V_0=10^{-6}$V) boundary potential, in the presence of the actual HTS cable. It is evident from the plot that $\mathbf{E}$ is perpendicular to the HTS regions for small $V_0$. This is in line with the concept that for small $V_0$ the HTS regions behave like a PEC, where $\mathbf{E}$ is orthogonal to their boundaries.

\begin{figure}[htp]
    \centering
    \includegraphics[width=0.7\textwidth]{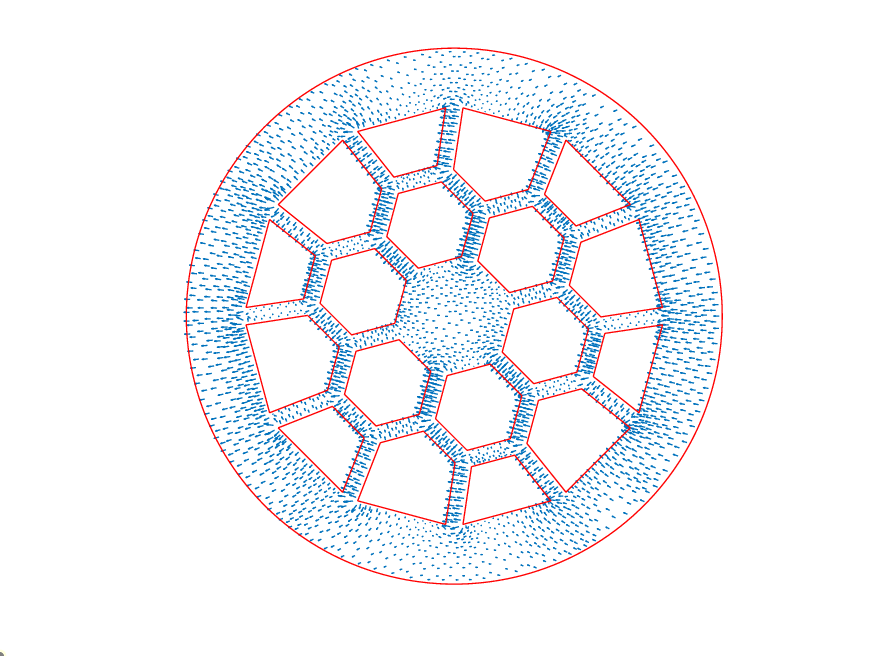}\quad\qquad
    \includegraphics[width=0.7\textwidth]{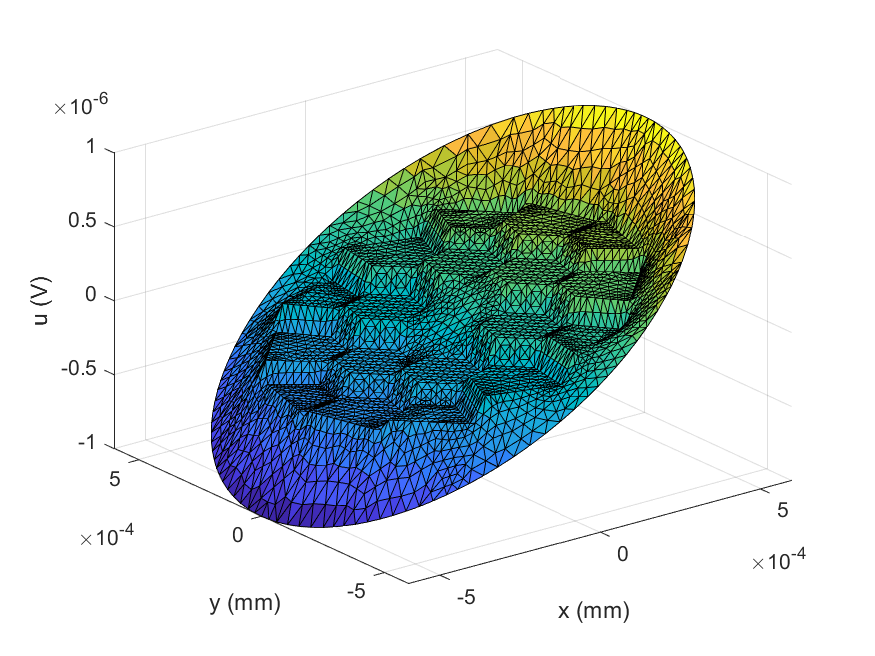}
    \caption{Plot of the electric field $\mathbf{E}$ (top) and of the electric scalar potential $u$ (bottom) for a \lq\lq small\rq\rq \ boundary potential ($V_0=10^{-6}$ V).}
    \label{fig_12_E_CEP_J_IEP}
\end{figure}

\subsection{Imaging via linear methods}
In this Section we provide numerical examples related to the solution of the inverse problem. We treat the case of $p_0=2$ since the limiting problem, being linear, provides a powerful \lq\lq bridge\rq\rq\ toward well assessed and mature methods and algorithms developed for linear problems.

From a more general perspective ($p_0 \ne 2$), the limiting case approach is very relevant because it brings to the light that, when facing an inverse problem with nonlinear materials, the canonical problem consists in solving an inverse problem for the  {weighted} $p_0-$Laplace equation, regardless of the specific nature of the nonlinearity.

The configuration is that of section \ref{forward_problem}, and it refers to a superconducting cable. Specifically, the inverse problem consists in retrieving the shape of defects in the Mg-Al alloy matrix of a superconducting wire, starting from boundary data. This is a very challenging task because of the nonlinear behaviour of the superconductive petals, which results in a nonlinear relationship between the applied boundary voltages and the measured boundary currents.

From a general perspective, the inverse problem can be tackled as follows: (i) the data are collected on the actual (nonlinear) configuration under small boundary data operations and (ii) the data are processed assuming that the governing equations are those for the limiting problem that, in this case, is linear.

As mentioned above, this means that the inverse problem of retrieving the position and shape of anomalies in the Mg-Al matrix can be addressed by means of the imaging methods developed for linear materials.

\subsubsection{The imaging algorithm}
The imaging algorithm attempts to estimate a tomographic reconstruction of the shape and position of the anomalies in the Mg-Al matrix, starting from boundary data. The boundary data we adopt is the Dirichlet-to-Neumann (DtN) operator, which maps a prescribed boundary potential into the corresponding normal component of the electrical current density entering through $\partial \Omega$. In other words, the DtN operator gives the voltage-to-current relationship on the boundary $\partial \Omega$.

In a practical setting we can measure a noisy version of a discrete approximation of the DtN operator, that is a noisy version of the DtN operator restricted to a finite dimensional linear subspace of applied voltages, as in the case of a finite set of boundary electrodes. Hereafter, this discrete approximation is referred as $\mathbf{G}$, the conductivity matrix.

In order to solve the inverse problem we adopt an imaging method based on the  Monotonicity Principle~\cite{Tamburrino_2002}, here briefly summarized for the sake of completeness.

Both the DtN operator and its discrete counterpart, the conductances matrix $\mathbf{G}$, satisfy a Monotonicity Principle (see \cite{gisser1990electric, Tamburrino_2002,corboesposito2021monotonicity}):
\begin{equation}\label{eqn:mono1}
    \sigma_1\leq\sigma_2 \Longrightarrow \mathbf{G}_1 \preceq \mathbf{G}_2,
\end{equation}
where $\sigma_1 \le \sigma_2$ is meant a.e. in $\Omega$ and $\mathbf{G}_1 \preceq \mathbf{G}_2$ means that $\mathbf{G}_1 - \mathbf{G}_2$ is a negative semi-definite matrix.
The Monotonocity Principle states a monotonic relationship between the pointwise values of the electrical conductivities and the measured boundary operator.

The targeted problem, that is the imaging of anomalies in the Mg-Al phase of a superconducting cable, is a two phase problem. One phase corresponds to the healthy material (the Mg-Al phase plus the PEC which replaces the superconductor, at small boundary data) while the other phase corresponds to the damaged region, having an electrical conductivity $\sigma_I$ smaller than the electrical conductivity $\sigma_{BG}$ of the Mg-Al phase. As a consequence, if $V_1$ and $V_2$ are two possible anomalies well-contained in the Mg-Al phase, and $V_1 \subseteq V_2$, it turns out that $\sigma_1 \ge \sigma_2$ and, therefore
\begin{equation}\label{eqn:mono2}
    V_1 \subseteq V_2 \Longrightarrow \mathbf{G}_{V_1} \succeq \mathbf{G}_{V_2}
\end{equation}

Equation~\eqref{eqn:mono2} can be translated in terms of an imaging method, as originally proposed in \cite{Tamburrino_2002}. Specifically, \eqref{eqn:mono2} is equivalent to $\mathbf{G}_{V_1} \nsucceq \mathbf{G}_{V_2} \Longrightarrow V_1 \nsubseteq V_2$, which gives for $V_1=T$ and $V_2=V$
\begin{equation}
    \label{eqn:mono3}
    \mathbf{G}_{T} \nsucceq \mathbf{G}_{V} \Longrightarrow T \nsubseteq V,
\end{equation}
where $V$ is the domain occupied by the unknown anomaly and $T$ a known domain occupied by test domain. Equation \eqref{eqn:mono3} makes it possible to infer whether the test domain $T$ is not contained in the unknown region occupied by the anomaly $V$. By repeating this type of test for several prescribed test anomalies occupying regions $T_1, \ T_2, \ \ldots$, we get an estimate ${V}_U$ of the unknown anomaly $V$ as:
\begin{align*}
    V_U=\bigcup_{k\in\Theta} T_k, \qquad
    \Theta=\{T_k\,|\,\mathbf{G}_{V}-\mathbf{G}_{T_k}\succeq 0\}.
\end{align*}
In the presence of noise, as is the case in any real-world problem, we adopt a slightly different strategy (see \cite{harrach2015resolution,Tamburrino2016284}). Specifically, let $\tilde{\mathbf{G}}_V=\mathbf{G}_V+\mathbf{N}$
be the noisy version of the data, where $\delta$ is an upper bound to the 2-norm of the noise matrix $\mathbf{N}$, i.e. $\norm{\mathbf{N}}_2\leq\delta$, then the reconstruction is obtained as
\begin{equation}\label{eq:rule}
    \tilde{V}_U =\bigcup_{k\in\Theta} T_k, \qquad \Theta=\{T_k\,|\,\tilde{\mathbf{G}}_{V}-\mathbf{G}_{T_k}+\delta\,\mathbf{I}\succeq 0\}
\end{equation}
where $\mathbf{I}$ is the identity matrix. Moreover, it has been shown that, under proper assumptions $V \subseteq \tilde{V}_U$ even in the presence of noisy data \cite{Tamburrino2016284}. With a similar imaging method it is possible to reconstruct a lower bound $\tilde{V}_L$ to $V$. Existence of upper ($\tilde{V}_U$) and lower ($\tilde{V}_L$) bounds to the unknown anomaly is an unique feature of this imaging method.

Finally, the matrices related to test domains $T_k$ are evaluated numerically by replacing the superconducting petals with perfect electric conductors.

\subsubsection{Numerical Results}

\begin{figure}[htp]
    \centering
    \includegraphics[width=0.4\textwidth]{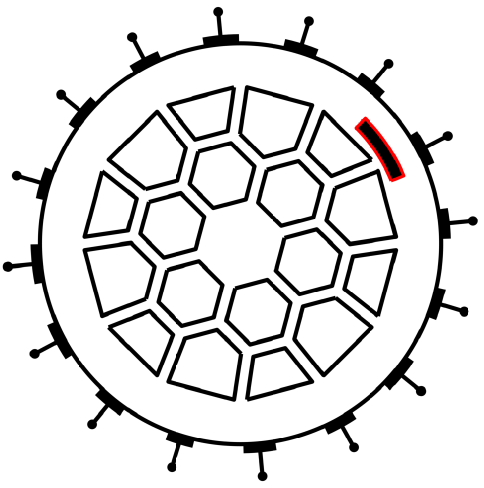}\quad\qquad
    \includegraphics[width=0.4\textwidth]{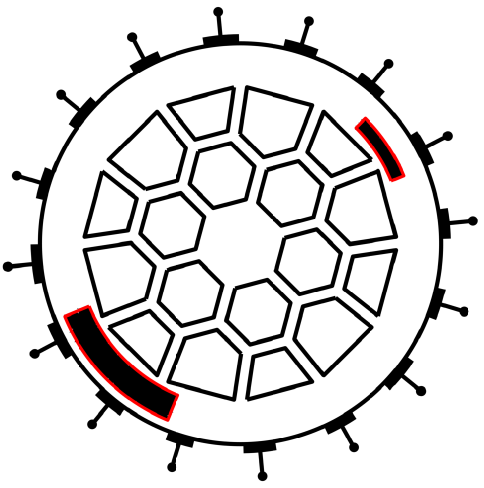} \\
    \includegraphics[width=0.4\textwidth]{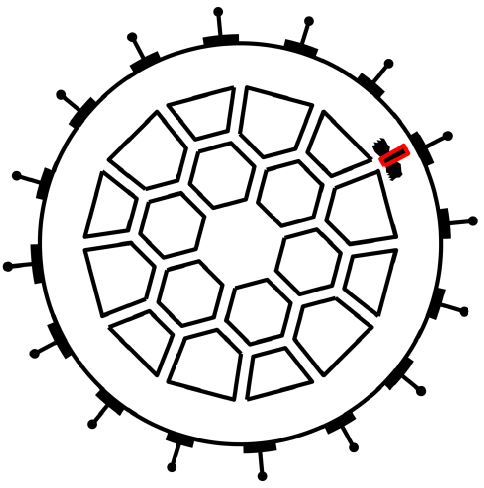}\quad\qquad
    \includegraphics[width=0.4\textwidth]{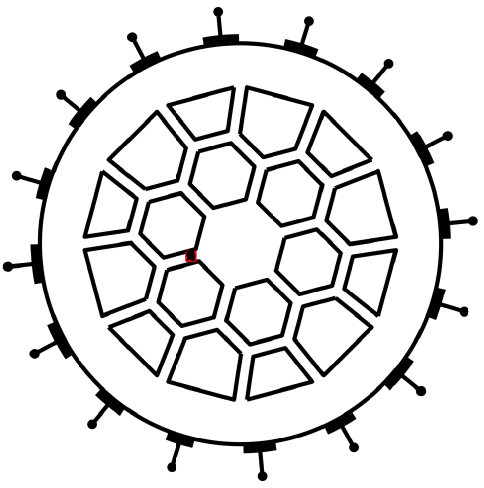} \\
    \includegraphics[width=0.4\textwidth]{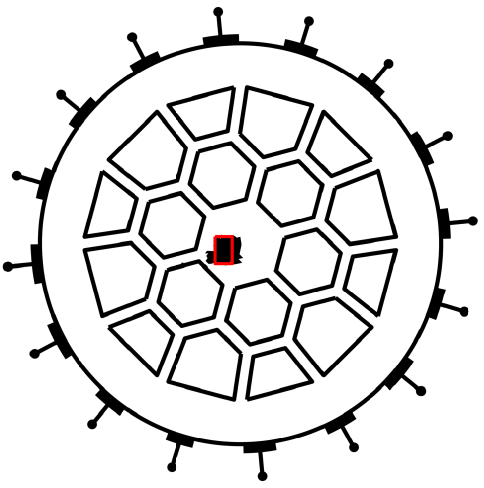}\quad\qquad
    \includegraphics[width=0.4\textwidth]{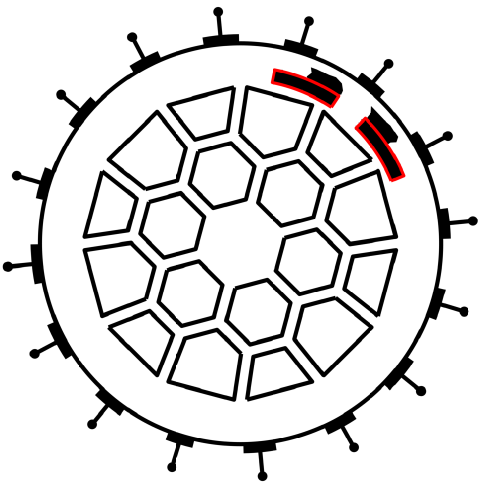}
    \caption{Reconstructions obtained by means of monotonicity based algorithm for linear materials in the \lq\lq small\rq\rq\ boundary data regime.}
    \label{fig_13_CEP}
\end{figure}

The reference geometry is that of the superconducting cable in Figure~\ref{fig_10_mesh} (left). We apply 16 electrodes at the boundary. The electrodes are equal and uniformly spaced.

We assume the following noise model
\begin{displaymath}
    \mathbf{N}=\eta \Delta G_{max} \mathbf{A}
\end{displaymath}
where $\mathbf{A}$ is a random matrix belonging to the Gaussian Orthogonal Ensamble (GOE) (see \cite{mehta2004random} for details),
\begin{displaymath}
    \Delta G_{max}=\max_{i,j}\abs{(G_V)_{ij}-(G_{BG})_{ij}},
\end{displaymath}
$\mathbf{G}_{BG}$ is the conductance matrix associated to the healthy superconducting wire, i.e. without defects, and $\eta$ is a parameter representing the (relative) noise level.

Hereafter we assume the noise level $\eta$ equal to $0.01$. Figure~\ref{fig_13_CEP} shows the reconstructions obtained with this method. The reconstructed defects are represented in black whereas the red line is the boundary of the real defects. The boundary of the petals is reported in black, like the electrodes but with a thicker line. These reconstructions clearly demonstrate the effectiveness of the approach.

The conductance matrix was evaluated by applying boundary voltages of $1$mV.

\section{Conclusions}
\label{Con_sec}
This study is a contribution to Inverse Problems in the presence of nonlinear materials. This subject is still at an early stage of development, as stated in \cite{lam2020consistency}. 

We focus on Electrical Resistance Tomography where the aim is to retrieve the electrical conductivity/resistivity of a material by means of stationary (DC) currents. Our main results consist prove that the original nonlinear problem can be replaced by a proper  {weighted} $p_0-$Laplace problem, when the prescribed Dirichlet data is small\rq\rq. Specifically, we prove that in the presence of two different materials, where at least one is nonlinear, the scalar potential in the outer region in contact with the boundary where the Dirichlet data is prescribed, can be computed by (i) replacing the interior region with either a Perfect Electric Conductor ($q_0<p_0$) or a Perfect Electric Insulator ($q_0<p_0$) and (ii) replacing the original problem (material) in the outer region with a  {weighted} $p_0-$Laplace problem. The presence of the \lq\lq fingerprint\rq\rq\ of a  {weighted} $p_0-$Laplace problem can be recognized to some extent in an arbitrary nonlinear problem. From the perspective of tomography, this is a significant result because it highlights the central role played by the $p_0-$Laplacian in inverse problems with nonlinear materials. For $p_0=2$, i.e. when the material in the outer region is linear, these results constitute a powerful bridge allowing all theoretical results, imaging methods and algorithms developed for linear materials to be brought into the arena of problems with nonlinear materials. 

The fundamental tool to prove the convergence results is the inequality appearing in Proposition \ref{fund_ine_propt0}. These results express the asymptotic behaviour of the Dirichlet energy for the outer region in terms of a factorized $p_0-$Laplacian form. Moreover, we have proved that our assumptions are sharp, by means of proper counterexamples.

Finally, we provide a numerical example, referring to a superconducting cable, as an application of the theoretical results proved in this paper.


\section*{Acknowledgements}
This work has been partially supported by the MiUR-Progetto Dipartimenti di eccellenza 2018-2022 grant \lq\lq Sistemi distribuiti intelligenti\rq\rq of Dipartimento di Ingegneria Elettrica e dell'Informazione \lq\lq M. Scarano\rq\rq, by the MiSE-FSC 2014-2020 grant \lq\lq SUMMa: Smart Urban Mobility Management\rq\rq\ and by GNAMPA of INdAM.

\section*{Data availability Statements}
The datasets generated during and/or analysed during the current study are available from the corresponding author on reasonable request.

\bibliographystyle
{ieeetr}
\bibliography{biblioCFPPT}

\end{document}